 \theoremstyle{plain}
\newtheorem{thm}{Theorem}[section]
\newtheorem{lemma}[thm]{Lemma}
\newtheorem{prop}[thm]{Proposition}
\newtheorem{cor}[thm]{Corollary}
\theoremstyle{definition}
\newtheorem{example}{Example}
\numberwithin{equation}{section}
\newcommand{\FF}{\mathbb{F}}  
\newcommand{\ZZ}{\mathbb{Z}}  
\newcommand{\CC}{\mathbb{C}}  
\newcommand{\Des}{\mathit{Des}}  
\newcommand{\BS}{\mathit{BS}} 
\newcommand{\maj}{\mathit{maj}}  
\newcommand{\imaj}{\mathit{imaj}}
\newcommand{\inv}{\mathit{inv}}
\newcommand{\shape}{\mathit{shape}}
\newcommand{\rightmost}{\mathrm{RT}}  
\newcommand{\GL}{\mathrm{GL}}
\newcommand{\Orthog}{\mathrm{O}}
\newcommand{\End}{\mathrm{End}}    
\newcommand{\Resf}{\mathrm{Resf}}  
\newcommand{\Indf}{\mathrm{Indf}}  
\newcommand{\Res}{\mathrm{Res}}  
\newcommand{\Ind}{\mathrm{Ind}}  
\newcommand{\IR}{\mathcal{IR}}
\newcommand{\One}{{1\hspace{-.14cm} 1}}
\newcommand{\spanning}{\text{-span}}
\newcommand{\DI}{{\buildrel\rm{DI} \over \longrightarrow}}
\newcommand{\RSK}{\ {\buildrel\rm{RSK} \over \longleftarrow}\ }
\newcommand{\jdt}{\ {\buildrel\rm{jdt} \over \longrightarrow}\ }
\title{$q$-Partition Algebra Combinatorics}
\author{
Tom Halverson \\
Department of Mathematics \\
Macalester College \\
Saint Paul, MN 55105 \\
halverson@macalester.edu 
\and Nathaniel Thiem \\
Department of Mathematics \\
University of Colorado at Boulder \\
Boulder, CO 80309 \\
nathaniel.thiem@colorado.edu
}
\begin{document}
\maketitle

\begin{abstract}
We study  a $q$-analog $Q_r(n,q)$  of the partition algebra $P_r(n)$.  The algebra $Q_r(n,q)$  arises as the centralizer algebra  of the finite general linear group $\GL_n(\FF_q)$ acting on a vector space $\IR_q^r$ coming from $r$-iterations of Harish-Chandra restriction and induction.  For $n \ge 2r$, we show that $Q_r(n,q)$ has the same semisimple matrix  structure as $P_r(n)$.  We compute the dimension $d_{n,r}(q) = \dim(\IR_q^r)$ to be a $q$-polynomial that specializes as $d_{n,r}(1) = n^r$ and $d_{n,r}(0) = B(r)$, the $r$th Bell number. Our method is to write $d_{n,r}(q)$  as a sum over integer sequences which are $q$-weighted by inverse major index.  We then find a basis of $\IR_q^r$  indexed by $n$-restricted $q$-set partitions of $\{1,\ldots, r\}$ and show that there are $d_{n,r}(q)$ of these.
\end{abstract}

\bigskip

\section*{Introduction}

The general linear group $\GL_n(\CC)$ and the symmetric group $S_r$ both act on tensor space 
$V^{\otimes r}$, where $V$ is the natural $n$ dimensional representation of $\GL_n(\CC)$ and $S_r$ acts by tensor place permutations.
Classical Schur--Weyl duality says that these actions commute and that each action generates the full centralizer of the other, so that as a $(\GL_n(\CC), S_r)$-bimodule the tensor space has a multiplicity-free decomposition given by 
$
V^{\otimes r} \cong \bigoplus_\lambda L(\lambda) \otimes S_r^\lambda,
$
where the $L(\lambda)$ are irreducible $\GL_n(\CC)$-modules and the $S_r^\lambda$ are the irreducible $S_r$-modules.

If we restrict $\GL_n(\CC)$ to its subgroup of orthogonal matrices $\Orthog_n(\CC)$, then the centralizer algebra is Brauer's centralizer algebra $B_r(n)= \End_{\Orthog_n(\CC)}(V^{\otimes r})$.   If we restrict further to the symmetric groups $S_{n-1} \subseteq S_n \subseteq \Orthog_n(\CC) \subseteq \GL_r(\CC)$, then the centralizer algebras are the partition algebras $P_r(n) = \End_{S_n}(V^{\otimes r})$ and  $P_{r + \frac{1}{2}}(n) = \End_{S_{n-1}}(V^{\otimes r})$.   Furthermore, the containments reverse:
 $$
 \begin{array}{rcccccccc}
\hbox{subgroup } G \subseteq \GL_n(\CC): \quad & S_{n-1} & \subseteq & S_n & \subseteq & \Orthog_n(\CC) & \subseteq & \GL_ n(\CC) \\
 & \updownarrow   && \updownarrow  && \updownarrow \\
\hbox{centralizer algebra } \End_G(V^{\otimes r}): \quad & P_{r + \frac{1}{2}}(n) & \supseteq  & P_r(n) & \supseteq & B_r(n) & \supseteq & \CC S_r. \\
\end{array}
$$
The Brauer algebras were introduced in 1937 by Richard Brauer.  The partition algebras arose early in the 1990s in the work of Martin \cite{Mar1}, \cite{Mar2} and later, independently, in the work of Jones  \cite{Jo} (see also \cite{HR}).  For $r \in \frac{1}{2} \ZZ_{>0}$, the partition algebra $P_r(n)$ has a basis indexed by the set partitions of $\{1, 2, \ldots, 2 r\}$ and a multiplication given by ``diagram multiplication."

If the groups $\GL_n(\CC)$ and $\Orthog_n(\CC)$ are replaced by their quantum groups $U_q ({\mathfrak{gl}}_n)$ and $U_q ({\mathfrak{o}}_n)$, respectively, then their centralizer algebras become 
 $$
 \begin{array}{rcccc}
\hbox{quantum group}: \quad  & U_q ({\mathfrak{o}}_n) & \subseteq & U_q ({\mathfrak{gl}}_n) \\
    & \updownarrow  && \updownarrow \\
\hbox{centralizer algebra}: \quad & BMW_{r}(n,q) & \supseteq & H_r(q). \\
\end{array}
$$
The algebra $H_r(q)$ is the Iwahori--Hecke algebra of the symmetric group $S_r$, and its action on tensor space $V^{\otimes r}$ is due to Jimbo \cite{Ji}. The algebra $BMW_r(n,q)$ is a $q$-analog of the Brauer algebra, called the Birman--Murakami--Wenzl algebra \cite{BW}, \cite{Mur}.  This leads naturally to the problem of finding a $q$-analog of the partition algebra.  One might consider replacing the symmetric group $S_n$ with its Iwahori-Hecke algebra $H_n(q)$, but $H_n(q)$ does not have a Hopf coproduct allowing it to act on the tensor product representation $V^{\otimes r}$. Furthermore, the Hecke algebra $H_n(q)$ is not found as a subalgebra of $U_q ({\mathfrak{o}}_n)$ (in fact, even the containment of $U_q ({\mathfrak{o}}_n)$ is  $U_q ({\mathfrak{gl}}_n)$ is more subtle than  $\Orthog_n(\CC) \subseteq \GL_n(\CC)$).

In this paper, we take a different approach to defining a $q$-partition algebra in which we replace the underlying tensor space with a module constructed by iterations of restriction and induction of finite general linear group modules.  This approach was first proposed  in unpublished work of T.\ Halverson and A.\ Ram and is motivated by the analogous construction of the partition algebra through restriction and induction of symmetric group modules (see \cite{HR}).  A forthcoming paper by T.\ Halverson, A.\ Ram, and N.\ Thiem will further study the $q$-partition algebra, and the analysis of the underlying restriction-induction module found in this paper is essential to that work.

\bigskip\noindent
The paper is organized as follows:

\begin{enumerate}
\item In Section 1, we describe the construction of a $q$-partition algebra $Q_r(n,q)$ as the centralizer of the general linear $\GL_n(\FF_q)$ over a finite field $\FF_q$ having $q$ elements.  For $n \ge 2 r$,  $Q_r(n,q)$ is the centralizer of $\GL_n(\FF_q)$ acting a vector space $\IR_q^r$, consisting of $r$ iterations of Harish-Chandra restriction and induction.  At $q = 1$, we have $\IR_{1}^r \cong V^{\otimes r}$, and we think of the symmetric group $S_n$ as the $q \to 1$ limit of $\GL_n(\FF_q)$.   We show that $Q_r(n,q)$ and $P_r(n,q)$ each have dimension equal to the Bell number $B(2r)$, and that for $n \ge 2r$ they have the same matrix block structure as semisimple algebras. 

\item  In  Section 2, we combinatorially study the dimension of of $\IR_q^k$. We show that 
\begin{equation}\label{intro:Formula}
\dim(\IR_q^r) = d_{n,r}(q) = \sum_{\ell = 1}^n S(r,\ell) [n] [n-1] \cdots [n-\ell+1],
\end{equation}
where $S(r,\ell)$ is a Stirling number of the second kind and $[j] = (q^j-1)/(q-1)$ is a $q$-integer.  The $q$-polynomial  $d_{n,r}(q)$ that appears in this formula has the property that $d_{n,r}(1) = n^r$ and $d_{n,r}(0) = B(r)$, the $r$th Bell number or number of partitions of $\{1, \ldots, r\}$ into subsets.  Thus $d_{n,r}(q)$ is a $q$-analog of both $n^r$ and $B(r)$, and it interpolates between the two as $q$ ranges through $0 \le q \le 1$. 
Our method is to write the dimension as a $q$-weighted sum over sequences $a= (a_1, \ldots, a_r)$, where each sequence is weighted by an analog of the inverse major  index $\imaj(a)$.  This is done in Proposition \ref{propsum}.   We then use a Schensted bijection (see \eqref{InsertionDef})  and the decomposition of $\IR_q^r$ as a $( \GL_n(\FF_q), Q_r(n,q))$-bimodule to prove formula \eqref{intro:Formula}.

\item  In Section 3, we define ($n$-restricted) $q$-set partitions of $\{1, \ldots, r\}$, and we show that $d_{n,r}(q)$ enumerates these objects.  We study the module $\IR_q^r$ and  find a basis for it that is indexed by these $q$-weighted set partitions of $\{1, \ldots, r\}$.  
\end{enumerate}

T.\ Halverson was partially supported by the National Science Foundation under grant DMS-0100975.  This research was completed while the authors were in residence at the Mathematical Sciences Research Institute (MSRI) in Spring 2008 for the program in Combinatorial Representation Theory. We are grateful for the support and the stimulating research environment at MSRI.  We thank Arun Ram for numerous useful conversations, and we thank Vic Reiner and Dennis Stanton for a helpful conversation about the distribution of the statistics $\inv$ and $\imaj$ used in the proof of Proposition  \ref{propsum}.  We are grateful to the anonymous referees for helpful suggestions.

\begin{section}{A $q$-Partition Algebra}

View $S_{n-1} \subseteq S_n$ under the natural embedding. The $n$-dimensional permutation module $V$ for $S_n$ is isomorphic to the induced module,  $V \cong \Ind_{S_{n-1}}^{S_n}(\One_{n-1}) = \CC S_n \otimes_{S_{n-1}} \One_{n-1}$, where $\One_{n-1}$ is the trivial $S_{n-1}$-module.  In \cite{HR}, Halverson and Ram emphasize viewing tensor products of  $S_n$-modules via restriction and induction and the ``tensor identity,"
\begin{equation}
\begin{array}{ccc}
\Ind_{S_{n-1}}^{S_n} \Res_{S_{n-1}}^{S_n} (W) &\cong& W \otimes V  \\
g \otimes_{S_{n-1}} w & \mapsto & g m \otimes ( g \otimes_{S_{n-1}} \One_{n-1}),
\end{array}
\end{equation}
where $W$ is any $S_n$ module, $w \in W, g \in S_{n-1}$. For $r \in \ZZ_{\ge 0}$ define,
\begin{equation}\label{ResIndSpace}
\IR^r_1 =  \underbrace{\Ind_{S_{n-1}}^{S_n} \Res_{S_{n-1}}^{S_n} \cdots \Ind_{S_{n-1}}^{S_n} \Res_{S_{n-1}}^{S_n}}_{\hbox{$2r$ functors}}  (\One_{n}), 
\end{equation}
and
\begin{equation}\label{ResIndSpaceHal}
\IR^{r + \frac{1}{2}}_1 =  \underbrace{ \Res_{S_{n-1}}^{S_n} \Ind_{S_{n-1}}^{S_n} \Res_{S_{n-1}}^{S_n} \cdots \Ind_{S_{n-1}}^{S_n} \Res_{S_{n-1}}^{S_n}}_{\hbox{$2r+1$ functors}}  (\One_{n}), 
\end{equation}
Then, by induction, $V^{\otimes r}  \cong \IR_1^r$,  and therefore the partition algebra satisfies, for $n \ge 2r$,
\begin{equation}\label{PnResInd}
\begin{array}{rclcl}
P_r(n) &=& \End_{S_n}( V^{\otimes r} )&\cong& \End_{S_n}(  \IR_1^r ), \\
P_{r + \frac{1}{2}}(n) &=& \End_{S_{n-1}}( V^{\otimes r} )&\cong& \End_{S_{n-1}}(  \IR_1^{r + \frac{1}{2}} ).
\end{array}
\end{equation}

Key to the decomposition of $\IR_1^r$ into irreducible symmetric group modules are the restriction and induction rules
\begin{equation}\label{ResIndForSn}
\Res_{S_{n-1}}^{S_n} (S_n^\lambda)  = \bigoplus_{\mu = \lambda - \square} S_{n-1}^\mu \quad\hbox{ and }\quad
\Ind_{S_{n-1}}^{S_n} (S_{n-1}^\mu)  = \bigoplus_{\lambda = \mu + \square} S_{n}^\lambda,
\end{equation}
where $S_n^\lambda$ is the irreducible $S_n$ module labeled by the partition $\lambda \vdash n$, 
$S_{n-1}^\mu$ is the irreducible $S_{n-1}$ module labeled by the partition $\mu \vdash (n-1)$, and $\lambda - \square$ and $\mu +\square$ denote adding and removing a box from the partition, respectively.

One can view the symmetric group $S_n$ as the $q\to1$ limit of the general linear group $G_n = \GL_n(\FF_q)$ over the finite field $\FF_q$.   Indeed, if $B$ is the Borel subgroup of upper triangular matrices in $\GL_n(\FF_q)$, then 
$$
| \GL_n(\FF_q)/B | = [n][n-1] \cdots [2][1],
$$
where $[n] = 1 + q + \ldots q^n$ is a $q$-analog of $n$ so that $[n][n-1] \cdots [2][1]$ is a $q$-analog of $n!$.  Furthermore, the irreducible unipotent representations of $G_n$ are denoted $G_n^\lambda$ and labeled by partitions $\lambda \vdash n$  (see, for example, \cite[\S 4.3]{Mac}).

We view $G_{n-1} \subseteq G_n$ as a Levi subgroup with blocks of size 1 and $n-1$ (see Section 3.3). 
Under Harish-Chandra restriction $\Resf$ and induction $\Indf$  (see Section 3.3) these modules satisfy exactly the same rules as  \eqref{ResIndForSn}, namely (see \cite[\S 4.3]{Mac}),
\begin{equation}\label{ResIndForGn}
\Resf_{G_{n-1}}^{G_n} (G_n^\lambda)  = \bigoplus_{\mu = \lambda - \square} G_{n-1}^\mu \quad\hbox{ and }\quad
\Indf_{G_{n-1}}^{G_n} (G_{n-1}^\mu)  = \bigoplus_{\lambda = \mu + \square} G_{n}^\lambda.
\end{equation}
For $r \in \ZZ_{\ge 0}$ define,
\begin{equation}\label{qResIndSpace}
\IR^r_q =  \underbrace{\Indf_{G_{n-1}}^{G_n} \Resf_{G_{n-1}}^{G_n} \cdots \Ind_{G_{n-1}}^{G_n} \Res_{G_{n-1}}^{G_n}}_{\hbox{$2r$ functors}}  (\One_{G_n}), 
\end{equation}
and
\begin{equation}\label{qResIndSpaceHalf}
\IR^{r + \frac{1}{2}}_q =  \underbrace{ \Res_{G_{n-1}}^{G_n} \Ind_{G_{n-1}}^{G_n} \Res_{G_{n-1}}^{G_n} \cdots \Ind_{G_{n-1}}^{G_n} \Res_{G_{n-1}}^{G_n}}_{\hbox{$2r+1$ functors}}  (\One_{G_n}), 
\end{equation}
Then, for integers $n \ge 2r$,  define
\begin{equation}\label{QnResInd}
\begin{array}{rcl}
Q_r(n,q) &=& \End_{G_n}(  \IR_q^r ), \\
Q_{r + \frac{1}{2}}(n,q) &=& \End_{G_{n-1}}(  \IR_q^{r + \frac{1}{2}} ).
\end{array}
\end{equation}
Equation \eqref{QnResInd} completely defines $Q_r(n,q)$ as an algebra of endomorphisms, however considerable work needs to be done to find a natural set of generators for $Q_r(n,q)$ and the relations that they satisfy.    This will be the subject of a forthcoming paper by T.\ Halverson, A.\ Ram, and N.\ Thiem.  The analysis in this paper will be foundational to that work.

The \emph{Bratteli diagram}  $\mathfrak{B}_{n}$ is a graph that encodes the decomposition of $\IR_q^r$.  Let $\mathfrak{B}_{n}$ have vertices organized into levels indexed by $r \in \frac{1}{2} \ZZ_{\ge 0}$  such that the vertices on level $r$ are labeled by the set of integer partitions $\Lambda_n^r$ defined by
\begin{equation}\label{IndexSet}
\begin{array}{rcl}
\Lambda_n^r& =& \left\{ \  \lambda = (\lambda_1, \ldots, \lambda_t) \vdash  n \ \big\vert \  \lambda_2 + \cdots +  \lambda_t \le r \ \right\}, \\
\Lambda_n^{r+\frac{1}{2}} &=& \left\{ \  \mu = (\mu_1, \ldots, \mu_t) \vdash ( n-1) \ \big\vert \ \mu_2 + \cdots +  \mu_t \le r\ \right\}.
\end{array}
\end{equation}
There is an edge in $\mathfrak{B}_{n}$ from  $\lambda \in \Lambda_n^r$ to  $\mu \in \Lambda_n^{r+\frac{1}{2}}$ or $\mu \in \Lambda_n^{r-\frac{1}{2}}$  if and only if $\lambda = \mu + \square$.  For example, the Bratteli diagram $\mathfrak{B}_{6}$, for $0 \le r \le 3$, is shown in Figure \ref{fig:bratteli}.  The edges in $\mathfrak{B}_{n}$ describe the restriction and induction rules in \eqref{ResIndForSn} and \eqref{ResIndForGn}.  Since $\Lambda_n^0$ contains only the partition $(n)$, which labels the trivial $G_n$ or $S_n$-module, the Bratteli diagram has the property that the vertices on level $r$ label the irreducible $G_n$ modules which appear in  $\IR_q^r$ or, equivalently, the 
irreducible $S_n$ modules which appear in $\IR^r$.  Furthermore, the number of paths from the top of the diagram to $\lambda \in \Lambda_n^r$ is the multiplicity of $G_n^\lambda$ in $\IR_q^r$.   The number of these paths is also indicated below each vertex in Figure \ref{fig:bratteli}.

 \begin{figure} [htp]
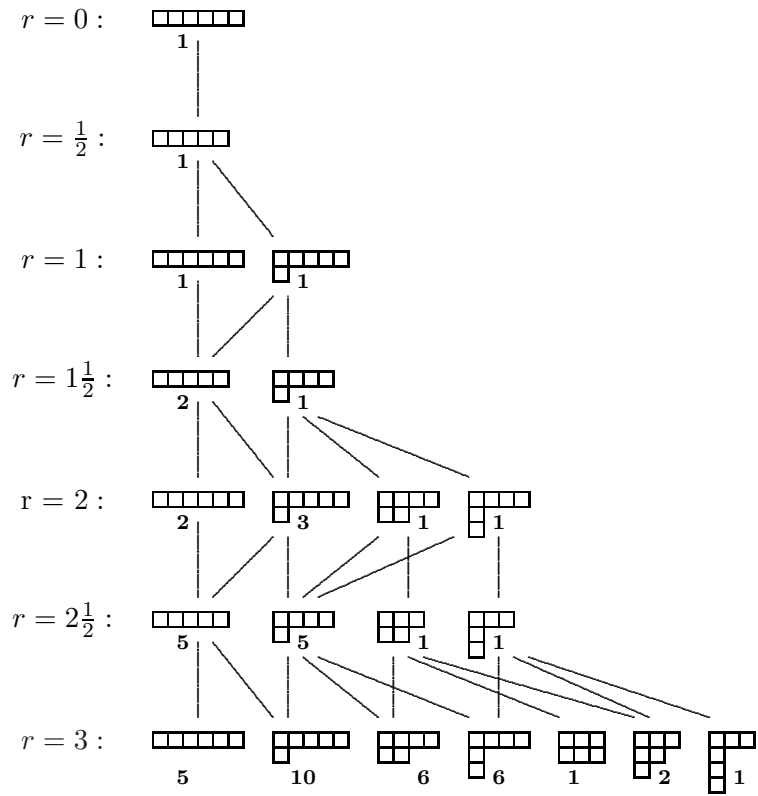

\label{fig:bratteli}
$$ 
{\beginpicture
\setcoordinatesystem units <0.2cm,0.2cm>         
\setplotarea x from -6 to 40, y from 4 to 31   
\linethickness=0.5pt                          

\put{$r = 3:$} at  -6 -.5

\color{black}
\put{$\scriptstyle{\bf 1}$} at 2 46
\put{$\scriptstyle{\bf 1}$} at 2 38
\put{$\scriptstyle{\bf 1}$} at 2 30  \put{$\scriptstyle{\bf 1}$} at 10 30
\put{$\scriptstyle{\bf 2}$} at 2 22 \put{$\scriptstyle{\bf 1}$} at 10 22
\put{$\scriptstyle{\bf 2}$} at 2 14 \put{$\scriptstyle{\bf 3}$} at 10 14   \put{$\scriptstyle{\bf 1}$} at 18 14  \put{$\scriptstyle{\bf 1}$} at 23 14
\put{$\scriptstyle{\bf 5}$} at 2 6  \put{$\scriptstyle{\bf 5}$} at 10 6  \put{$\scriptstyle{\bf 1}$} at 18 6  \put{$\scriptstyle{\bf 1}$} at 23 6
\put{$\scriptstyle{\bf 5}$} at 2 -3 \put{$\scriptstyle{\bf 10}$} at 10 -3  \put{$\scriptstyle{\bf 6}$} at 18 -3  \put{$\scriptstyle{\bf 6}$} at 23 -3
\put{$\scriptstyle{\bf 1}$} at 28 -3  \put{$\scriptstyle{\bf 2}$} at 34 -3  \put{$\scriptstyle{\bf 1}$} at 39 -3
\color{black}

\putrectangle corners at 0 -1 and 1 0
\putrectangle corners at 1 -1 and 2 0
\putrectangle corners at 2 -1 and 3 0
\putrectangle corners at 3 -1 and 4 0
\putrectangle corners at 4 -1 and 5 0
\putrectangle corners at 5 -1 and 6 0

\putrectangle corners at 8 -1 and 9 0
\putrectangle corners at 9 -1 and 10 0
\putrectangle corners at 10 -1 and 11 0
\putrectangle corners at 11 -1 and 12 0
\putrectangle corners at 12 -1 and 13 0
\putrectangle corners at 8 -2 and 9 -1

\putrectangle corners at 15 -1 and 16 0
\putrectangle corners at 16 -1 and 17 0
\putrectangle corners at 17 -1 and 18 0
\putrectangle corners at 18 -1 and 19 0
\putrectangle corners at 15 -2 and 16 -1
\putrectangle corners at 16 -2 and 17 -1

\putrectangle corners at 21 -1 and 22 0
\putrectangle corners at 22 -1 and 23 0
\putrectangle corners at 23 -1 and 24 0
\putrectangle corners at 24 -1 and 25 0
\putrectangle corners at 21 -2 and 22 -1
\putrectangle corners at 21 -3 and 22 -2

\putrectangle corners at 27 -1 and 28 0
\putrectangle corners at 28 -1 and 29 0
\putrectangle corners at 29 -1 and 30 0
\putrectangle corners at 27 -2 and 28 -1
\putrectangle corners at 28 -2 and 29 -1
\putrectangle corners at 29 -2 and 30 -1

\putrectangle corners at 32 -1 and 33 0
\putrectangle corners at 33 -1 and 34 0
\putrectangle corners at 34 -1 and 35 0
\putrectangle corners at 32 -2 and 33 -1
\putrectangle corners at 33 -2 and 34 -1
\putrectangle corners at 32 -3 and 33 -2

\putrectangle corners at 37 -1 and 38 0
\putrectangle corners at 38 -1 and 39 0
\putrectangle corners at 39 -1 and 40 0
\putrectangle corners at 37 -2 and 38 -1
\putrectangle corners at 37 -3 and 38 -2
\putrectangle corners at 37 -4 and 38 -3

\plot 3 1 3 6  /
\plot 8 1 4 6  /
\plot 9 1 9 5  /
\plot 15 1 10 5  /
\plot 16 1 16 5  /
\plot 27 1 17 5  /
\plot 32 1 18 5  /
\plot 21 1 11 5  /
\plot 23 1 23 5  /
\plot 33 1 24 5  /
\plot 37 1 25 5  /

\put{$r = 2\frac{1}{2}:$} at  -6 7.5
\putrectangle corners at 0 7 and 1 8
\putrectangle corners at 1 7 and 2 8
\putrectangle corners at 2 7 and 3 8
\putrectangle corners at 3 7 and 4 8
\putrectangle corners at 4 7 and 5 8

\putrectangle corners at 8 7  and 9 8
\putrectangle corners at 9 7  and 10 8
\putrectangle corners at 10 7 and 11 8
\putrectangle corners at 11 7 and 12 8
\putrectangle corners at 8  6 and 9 7

\putrectangle corners at 15 7 and 16 8
\putrectangle corners at 16 7 and 17 8
\putrectangle corners at 17 7 and 18 8
\putrectangle corners at 15 6 and 16 7
\putrectangle corners at 16 6 and 17 7

\putrectangle corners at 21 7 and 22 8
\putrectangle corners at 22 7 and 23 8
\putrectangle corners at 23 7 and 24 8
\putrectangle corners at 21 6 and 22 7
\putrectangle corners at 21 5 and 22 6

\plot 3 9 3 14  /
\plot 8 13 4 9  /
\plot 9 13 9 9  /
\plot 15 13 10 9  /
\plot 17 13 17 9  /
\plot 20 13 11 9  /
\plot 23 13 23 9  /

\put{r = $2:$} at  -6 15.5
\putrectangle corners at 0  15 and 1  16
\putrectangle corners at 1  15 and 2  16
\putrectangle corners at 2  15 and 3  16
\putrectangle corners at 3  15 and 4  16
\putrectangle corners at 4  15 and 5  16
\putrectangle corners at 5  15 and 6  16

\putrectangle corners at 8  15 and 9   16
\putrectangle corners at 9  15 and 10  16
\putrectangle corners at 10 15 and 11  16
\putrectangle corners at 11 15 and 12  16
\putrectangle corners at 12 15 and 13  16
\putrectangle corners at 8  14 and 9  15

\putrectangle corners at 15  15 and 16  16
\putrectangle corners at 16  15 and 17  16
\putrectangle corners at 17  15 and 18  16
\putrectangle corners at 18  15 and 19  16
\putrectangle corners at 15  14 and 16  15
\putrectangle corners at 16  14 and 17  15

\putrectangle corners at 21  15 and 22  16
\putrectangle corners at 22  15 and 23  16
\putrectangle corners at 23  15 and 24  16
\putrectangle corners at 24  15 and 25  16
\putrectangle corners at 21  14 and 22  15
\putrectangle corners at 21  13 and 22 14

\plot 3 17 3 22  /
\plot 8 17 4 22  /
\plot 9 17 9 21  /
\plot 15 17 10 21  /
\plot 21 17 11 21  /

\put{$r = 1\frac{1}{2}:$} at  -6 23.5
\putrectangle corners at 0  23 and 1  24
\putrectangle corners at 1  23 and 2  24
\putrectangle corners at 2  23 and 3  24
\putrectangle corners at 3  23 and 4  24
\putrectangle corners at 4  23 and 5  24

\putrectangle corners at 8  23 and 9   24
\putrectangle corners at 9  23 and 10  24
\putrectangle corners at 10 23 and 11  24
\putrectangle corners at 11 23 and 12  24
\putrectangle corners at 8  22 and 9  23

\plot 3 25 3 30  /
\plot 8 29 4 25  /
\plot 9 29 9 25  /

\put{$r = 1:$} at  -6 31.5
\putrectangle corners at 0  31 and 1  32
\putrectangle corners at 1  31 and 2  32
\putrectangle corners at 2  31 and 3  32
\putrectangle corners at 3  31 and 4  32
\putrectangle corners at 4  31 and 5  32
\putrectangle corners at 5  31 and 6  32

\putrectangle corners at 8  31 and 9   32
\putrectangle corners at 9  31 and 10  32
\putrectangle corners at 10 31 and 11  32
\putrectangle corners at 11 31 and 12  32
\putrectangle corners at 12 31 and 13  32
\putrectangle corners at 8  30 and 9  32

\put{$r = \frac{1}{2}:$} at  -6 39.5
\putrectangle corners at 0  39 and 1  40
\putrectangle corners at 1  39 and 2  40
\putrectangle corners at 2  39 and 3  40
\putrectangle corners at 3  39 and 4  40
\putrectangle corners at 4  39 and 5  40


\plot 3 33 3 38  /
\plot 8 33 4 38  /

\put{$r = 0:$} at  -6 47.5
\putrectangle corners at 0  47 and 1  48
\putrectangle corners at 1  47 and 2  48
\putrectangle corners at 2  47 and 3  48
\putrectangle corners at 3  47 and 4  48
\putrectangle corners at 4  47 and 5  48
\putrectangle corners at 5  47 and 6  48


\plot 3 41 3 46  /

\endpicture}
$$
\caption{The Bratteli diagram $\mathfrak{B}_6$  for $0 \le r \le 3$.  The rows contain the partitions in $\Lambda_6^r$, the paths from the top of the diagram to $\lambda \in \Lambda_6^r$ are the $r$-vacillating tableaux of shape $\lambda$, and the number of paths to $\lambda$ is $m_r^\lambda$.   The label on vertex $\lambda$ is  $m_r^\lambda$.}
\end{figure}

From double-centralizer theory (see for example \cite[Theorem 5.4]{HR})
we have the following properties which justify calling $Q_r(n,q)$ a $q$-analog of the partition algebra $P_r(n)$. These results follow from the fact that 
the decomposition of $\IR^r_q$  as a $G_n$ module is the same as the decomposition of 
$\IR^r_1$  as an $S_n$-module.  We assume $r,n \in \ZZ_{\ge 0}$ and $n \ge 2r$,
\begin{enumerate} 
\item The irreducible representations of $P_r(n)$ and $Q_r(n,q)$ are each indexed by the partitions in $\Lambda_n^r$. These are the partitions which label the irreducible $S_n$ and $G_n$ modules which appear in $\IR_1^r$ and $\IR_q^r$, respectively. 

\item As bimodules for $(S_n, P_r(n))$ and $(G_n, Q_r(n,q))$,  we have
\begin{equation}\label{intro:decomp}
\IR^r_1 \cong \bigoplus_{\lambda \in \Lambda_n^r} S_n^\lambda \otimes P_r^\lambda \quad\hbox{ and }\quad
\IR_q^r \cong \bigoplus_{\lambda \in \Lambda_n^r}  G_n^\lambda \otimes Q_r^\lambda,
\end{equation}
where $P_r^\lambda$ and $Q_r^\lambda$ are irreducible $P_r(n)$ and $Q_r(n,q)$-modules, respectively.

\item $m_r^\lambda = \dim(P_r^\lambda) = \dim(Q_r^\lambda)$ equals the multiplicity of $S_n^\lambda$ in $\IR^r$ (or, equivalently, the multiplicity of $G_n^\lambda$ in $\IR_q^r$).  The value of $m_r^\lambda$ can be computed by counting paths to $\lambda$ in the Bratteli diagram.  Each of these paths is a sequence of partitions obtained by iteratively removing and adding a box; these are called $r$-vacillating tableaux of shape $\lambda$ (see Section 2) and they are also studied in \cite{CDDSY} and \cite{HL}.

\item By general Wedderburn theory,  $\dim(P_r(n)) = \sum_{\lambda \in \Lambda_n^r} \dim(P_r^\lambda)^2 =  \sum_{\lambda \in \Lambda_n^r} \dim(Q_r^\lambda)^2 = \dim(Q_r(n,q))$.  It follows that $\dim(P_r(n)) = \dim(Q_r(n,q))$ equals the number of set partitions of $\{1, 2, \ldots, 2r\}$ which is the Bell number $B(2r)$.

\item For $r \in \ZZ_{>0}$, there is a natural chain of embeddings 
$$
Q_{r-\frac{1}{2}}(n,q) \subseteq Q_r(n,q)  \subseteq Q_{r + \frac{1}{2}}(n,q),
$$
The restriction rule for $Q_{r-\frac{1}{2}}(n,q) \subseteq Q_r(n,q)$ is given by the Bratteli diagram. Namely, the module $Q^\mu_{r - \frac{1}{2}}$ is a component of $\Res^{Q_r(n,q)}_{Q_{r-\frac{1}{2}}(n,q)}  (Q^\lambda_r)$ if and only if $\mu$ and $\lambda$ are connected by an edge in $\mathfrak{B}_n$.  This same rule holds if $Q_r(n,q)$ is replaced by $P_r(n)$.
\end{enumerate}

Since restriction from $S_n$ to $S_{n-1}$ preserves the dimension of an $S_n$-module, and induction from $S_{n-1}$ to $S_n$ increases the dimension by a factor of $n$, we have that $\dim(\IR^r_1) = n^r$. This is to be expected, since $\IR^r_1 \cong V^{\otimes r}$ and $\dim(V) = n$. 
The dimension of the irreducible symmetric group modules are $\dim(S^\lambda_n) = f^\lambda_n$, the number of standard Young tableaux of shape $\lambda$ (given by the hook formula). The dimensions of the irreducible partition algebra modules are $\dim(P_r^\lambda) = m_r^\lambda$,  the number of $r$-vacillating tableau of shape $\lambda$.  Computing dimensions on both sides of the first equation in \eqref{intro:decomp} gives the identity
\begin{equation}\label{intro:Pnidentity}
n^r = \sum_{\lambda\vdash n} f^\lambda_n m^\lambda_r.
\end{equation}
A combinatorial proof of  \eqref{intro:Pnidentity} is given in  \cite{HL}  by defining a Schensted-like insertion/deletion process to find a bijection 
$$
\left\{ (a_1, \ldots, a_r ) \right\}  \longleftrightarrow  \left\{ (P,Q) \right\}
$$
between integer sequences $(a_1, \ldots, a_r)$ with $a_j \in \{ 1, \ldots, n \}$ and pairs $(P,Q),$ where $P$ is a standard Young tableau of shape $\lambda \vdash n$ and $Q$ is an $r$-vacillating tableau  of shape $\lambda \vdash n$.    If we compute the dimensions on each side of the second equation in  \eqref{intro:decomp} we get
\begin{equation}
d_{n,r}(q) = \dim(\IR_q^r) = \sum_\lambda f^\lambda_n(q) m^\lambda_k,
\end{equation}
where now $f^\lambda_n(q)  = \dim(G_n^\lambda)$ is the well-known $q$-analog of $f^\lambda_n$, given by the q-hook formula  and $d_{n,r}(q)$ is a $q$-polynomial which satisfies $d_{n,r}(1) = n^r$. 
In Section 2, we compute this dimension explicitly by carrying the $q$-weights from $f^\lambda(q)$ across  the Schensted insertion.

\end{section}

\begin{section}{Combinatorial Computation of $d_{n,r}(q)$}

This section gives a purely combinatorial derivation of the formula for the $q$-polynomial $d_{n,r}(q)$.   In Section 3, we give a basis of $\IR_q^r$ and we count the elements of the basis to give another proof that $\dim(\IR_q^r) = d_{n,r}(q)$.

\begin{subsection}{The Delete-Insert Schensted Algorithm}

For $n,r \in \ZZ_{>0},$ define
$$
\{1, \ldots,n\}^r= \left\{\  (a_1, \ldots, a_r) \ \big\vert \ a_j \in \{1,\ldots, n\} \ \right\}.
$$
This set has cardinality $n^r$.
For a partition $\lambda \vdash n$, a standard tableau of shape $\lambda$ is a filling of the boxes of the Young diagram of $\lambda$ with integers $1, \ldots, n$ such that the rows increase left-to-right and the columns increase top-to-bottom.  As in \cite{HL} we define an algorithm that maps sequences in $\{1,\ldots,n\}^r$ to standard tableaux. 
Let $a = (a_1, a_2 \ldots, a_r)$ and recursively define $P_i$ and $P_{i+\frac{1}{2}}$ for $0\le i \le r$, by 
\begin{equation}\label{InsertionDef}
\begin{array}{rcll}
P_0 &=& {\beginpicture
\setcoordinatesystem units <0.4cm,0.4cm>        
\setplotarea x from 1 to 7.3, y from -1 to 1   
\linethickness=0.5pt   
\putrectangle corners at 1  -.5 and  2 .5
\putrectangle corners at 2  -.5 and  3 .5
\putrectangle corners at 3  -.5 and  6 .5
\putrectangle corners at 6  -.5 and  7 .5
\put{$\scriptstyle{1}$} at 1.5 0
\put{$\scriptstyle{2}$} at 2.5 0
\put{$\cdots$} at 4.5 0
\put{$\scriptstyle{n}$} at 6.5 0
\endpicture}, \\
P_{i+\frac{1}{2}} &=& P_{i-1}  \jdt a_i , \qquad & 0 \le i \le r-1, \\
P_{i+1} &=& P_{i+\frac{1}{2}} \RSK a_i, \qquad & 0 \le i \le r-1,
\end{array}
\end{equation}
where this notation means that we first remove the letter $a_i$ from $P_{i}$ using Sch\"utzenberger's {\it jeu-de-taquin} to get a tableau $P_{i+\frac{1}{2}}$, and then we reinsert $a_i$ into $P_{i+\frac{1}{2}}$ using \emph{Robinson--Schensted--Knuth row  insertion} to obtain $P_{i+1}$.  See \cite[A1.2,7.11]{Sta2} for the definitions of jeu-de-taquin and RSK insertion.
Example \ref{ex:DI} provides an example of the application of this algorithm.

For $0 \le i \le k$, let $\lambda^{(i)}$ be the partition shape of the tableau $P_i$ and let $\lambda^{(i+\frac{1}{2})}$ be the partition shape of $P_{i+\frac{1}{2}}$.   The final tableau $P_a= P_r$ that results from the insertion of $a = (a_1, \ldots, a_r)$ is the \emph{insertion tableau}. It is a standard Young tableaux of shape $\lambda = \lambda^{(r)}$.  The sequence of shapes that arise along the way,
$$
Q_a = \left((n) =  \lambda^{(0)}, \lambda^{(\frac{1}{2})}, \lambda^{(1)}, \ldots, \lambda^{(r)} = \lambda \right),
$$
is the \emph{recording tableau} of the sequence.   The recording tableaux that appear in this process are uniquely described by the following properties:
\begin{enumerate}
\item $\lambda^{(0)} = (n),$
\item For  $0 \le i \le r-1$,  $\lambda^{(i+\frac{1}{2})}$ is a partition of $n-1$ that is obtained from  $\lambda^{(i)}$ by deleting a box,
\item For  $1 \le i \le r$,  $\lambda^{(i)}$ is a partition of $n$ that is obtained from $\lambda^{(i-\frac{1}{2})}$ by adding a box.
\end{enumerate}
If $\lambda = \lambda^{(r)}$, then a tableau satisfying these properties is called a $r$-\emph{vacillating tableau of shape $\lambda$}.  See \cite{HL} and \cite{CDDSY}.  The partition shapes that appear in the $\ell$th step in the process of inserting $a \in \{1,\ldots n\}^r$ are in the set
\begin{equation*}
\begin{array}{rcl}
\Lambda_n^\ell& =& \left\{ \  \lambda = (\lambda_1, \ldots, \lambda_t) \vdash  n \ \big\vert \  \lambda_2 + \cdots +  \lambda_t \le \ell \ \right\}. \\
\Lambda_n^{\ell+\frac{1}{2}} &=& \left\{ \  \lambda = (\lambda_1, \ldots, \lambda_t) \vdash  n-1 \ \big\vert \  \lambda_2 + \cdots +  \lambda_t \le \ell\ \right\}.
\end{array}
\end{equation*}

The $r$-vacillating tableau also appear in the Bratteli diagram $\mathfrak{B}_n$ shown in Figure \ref{fig:bratteli} for  
$n = 6$ and $0 \le r \le 3$.  The paths from the top of the diagram to  $\lambda$ on level $r$ are the $r$-vacillating tableaux of shape $\lambda$, and $m_r^\lambda$ is the number of $r$-vacillating tableaux of shape $\lambda$.  When $r \ge 2n$, the number is independent of $n$.  We refer to these paths as ``tableaux" since they determine paths in the Bratteli diagram in the same way that standard Young tableaux determine paths in Young's lattice.

We let $a \DI (P_a, Q_a)$ denote the ``delete-insert" process defined in \eqref{InsertionDef}, which associates each $a\in \{1,\ldots, n\}^r$ with a pair $(P_a,Q_a)$ consisting of a standard tableau $P_a$ and an $r$-vacillating tableaux $Q_a$, each of shape $\lambda \in \Lambda_n^r$.  In \cite{HL} this algorithm is shown to be invertible and thus provides a bijection
\begin{equation}\label{bijection}
\{1, \ldots ,n\}^r \DI \bigsqcup_{\lambda \in \Lambda_n^r} \left\{ \ (P,Q) \ \Big\vert 
\begin{array}{l}
\hbox{ $P$ is a standard tableau of shape $\lambda$}\\
\hbox{ $Q$ is a $r$-vacillating tableau of shape $\lambda$}\\
\end{array}
\right\}.
\end{equation}
This gives a combinatorial proof of the identity
\begin{equation}\label{PartAlgIdentity}
n^r =  \sum_{\lambda \in \Lambda^r_n} f^\lambda_n m^\lambda_r,
\end{equation}
where $f^\lambda$ is the number of standard tableaux of shape $\lambda$ (given by the hook formula), and 
$m^\lambda_r$ is the number of $r$-vacillating tableaux of shape $\lambda$.

\end{subsection}

\begin{subsection}{Delete/Insertion and Major Index}

We now show that the bijection \eqref{InsertionDef} carries the backsteps associated to integer sequences  to the descent set on standard tableaux.  We first map sequences in $\{1,\ldots,n\}^r$ to permutations in $S_n$ using  following surjection
\begin{equation}\label{wadef}
\begin{array}{ccl}
 \{1,\ldots, n\}^r & \to & S_n \\
a= (a_1, \ldots, a_r) & \mapsto & w_a = \rightmost( 1,2, \ldots, n, a_1, \ldots, a_r )
\end{array}
\end{equation}
where $\rightmost( 1,2, \ldots, n, a_1, \ldots, a_r )$ is the permutation consisting of the rightmost occurrence of each integer in $\{1, \ldots, n\}$. For example, 
$$
a = (2,1,3,1,6,4,6,3,4)\quad  \mapsto \quad w_a = \rightmost( 1,2, 3,4,5,6,2,1,3,1,6,4,6,3,4 ) = (5,2,1,6,3,4).
$$
Alternatively, we can produce $w_a = (b_1,b_2, \ldots, b_n)$ iteratively using the following algorithm. 
\begin{equation}\label{iterativepi}
\begin{array}{lll}
(1)& \begin{array}{l}\hbox{$w^{(0)}  =  (1,2, \ldots, n)$,}\end{array} & \\
(2)&
\begin{array}{l}
\hbox{$w^{(i+1)}$ is obtained from $w^{(i)}$ by deleting $a_i$ from $w^{(i)}$} \\
\hbox{and then appending $a_i$ to the right  of $w^{(i)}$,}
\end{array}  &\quad 1 \le i < r. \\
(3)&\begin{array}{l} \hbox{$w_a  = w^{(r)}$.} \end{array} \\
\end{array}
\end{equation}
Applying this algorithm to $a = (2,1,3,1,6,4,6,3,4)$, for example,  yields the same $w_a$ as above:
$$
\begin{array}{llll}
w^{(0)} = (1,2,3,4,5,6)&&\qquad &w^{(5)} = (4,5,2,3,1,6) \\
w^{(1)} = (1,3,4,5,6,2)&&&w^{(6)} = (5,2,3,1,6,4) \\
w^{(2)} = (3,4,5,6,2,1)&&&w^{(7)} = (5,2,1,6,4,3) \\
w^{(3)} = (4,5,6,2,1,3)&&& w^{(8)} = (5,2,1,6,3,4)\\
w^{(4)} = (4,5,6,2,3,1)&&& w_a =  (5,2,1,6,3,4).\\
\end{array}
$$
It is clear that the processes defined in \eqref{wadef} and \eqref{iterativepi} yield the same result since the elements of $a$ are cycled to the right end of of $w_a$ in the order that they appear in $a$..

The \emph{backsteps} (see for example \cite{Lo}) in a permutation $w = (w_1, w_2, \ldots, w_n) \in S_n$ are
\begin{equation}
\BS(w)  = \left\{\ i \ \big\vert\ i+1 \hbox{ is to the left of $i$ in $w=(w_1, w_2, \ldots, w_n)$} \ \right\}.
\end{equation}
The \emph{descent set} in $w \in S_n$ is defined by  $\Des(w) =  \Des(w_1, w_2, \ldots, w_n) = \{\ i \ \vert\  w_i > w_{i+1} \ \},$
and it is easy to check that $\BS(w) = \Des(w^{-1})$.   For example if $w = (5,2,1,6,3,4)$ then $\BS(w)  = \Des(w^{-1}) = \{1, 4\}$.
If $P$ is a standard tableau, then the {descent set} of $P$ is
\begin{equation}
\Des(P) =  \left\{\ i \ \vert\  \hbox{ $i+1$ is in a lower row than $i$  in $P$}\ \right\}.
\end{equation}
For example $\Des\left(
{\beginpicture
\setcoordinatesystem units <0.4cm,0.4cm>        
\setplotarea x from 1 to 5, y from -1 to 1   
\linethickness=0.5pt   
\putrectangle corners at 1  .5 and  2 1.5
\putrectangle corners at 2  .5 and  3 1.5
\putrectangle corners at 3  .5 and  4 1.5
\putrectangle corners at 4  .5 and  5 1.5
\putrectangle corners at 1  -.5 and  2 .5
\putrectangle corners at 2  -.5 and  3 .5
\putrectangle corners at 3  -.5 and  4 .5
\putrectangle corners at 4  -.5 and  5 .5
\putrectangle corners at 1  -1.5 and  2 -.5
\putrectangle corners at 2  -1.5 and  3 -.5
\put{$\scriptstyle{1}$} at 1.5 1
\put{$\scriptstyle{2}$} at 2.5 1
\put{$\scriptstyle{5}$} at 3.5 1
\put{$\scriptstyle{6}$} at 4.5 1
\put{$\scriptstyle{3}$} at 1.5 0
\put{$\scriptstyle{7}$} at 2.5 0
\put{$\scriptstyle{9}$} at 3.5 0
\put{$\scriptstyle{10}$} at 4.5 0
\put{$\scriptstyle{4}$} at 1.5 -1
\put{$\scriptstyle{8}$} at 2.5 -1
\endpicture}\right) = \{2,3, 6,7\}.$
See Example \ref{ex:DI} for an illustration of the following proposition.

\begin{prop} If $a \in \{1,\ldots,n\}^r$ and $a \DI (P_a,Q_a)$, where $P_a$ is a standard tableau of shape $\lambda \in \Lambda_n^r$ and $Q_a$ is an $r$-vacillating tableau, then 
$$
\BS( w_a ) = \Des(P_a).
$$
\end{prop}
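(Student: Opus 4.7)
My plan is to reduce the proposition to the classical Robinson--Schensted descent identity: for any permutation $w \in S_n$, the standard RSK insertion tableau $P(w)$ satisfies $\Des(P(w)) = \Des(w^{-1}) = \BS(w)$. The crucial intermediate step is to prove the stronger identity $P_a = P(w_a)$, after which the proposition follows immediately since $\Des(P_a) = \Des(P(w_a)) = \BS(w_a)$.

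I would prove $P_a = P(w_a)$ by induction on $r$. The base case $r=0$ is immediate: then $a$ is empty, $w_a = (1,2,\ldots,n)$, and $P_a = P_0$ is the single-row standard tableau, which is precisely the RSK insertion tableau of the identity. For the inductive step, write $a = (a', b)$ with $a' = (a_1,\ldots,a_{r-1})$ and $b = a_r$, and assume $P_{a'} = P(w_{a'})$. By the iterative description in \eqref{iterativepi}, if we write $w_{a'} = v_1 \cdot b \cdot v_2$ with $b$ at its unique occurrence, then $w_a = v_1 \cdot v_2 \cdot b$, so $P(w_a)$ is obtained by RSK-inserting $b$ into $P(v_1 v_2)$. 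On the other hand, \eqref{InsertionDef} expresses $P_a$ as the result of first applying $P_{a'} \jdt b$ and then RSK-inserting $b$. Thus the inductive step reduces to the auxiliary lemma:
\begin{quote}
For any permutation $u$ of $\{1,\ldots,n\}$ containing the letter $b$, the jdt-deletion $P(u) \jdt b$ equals $P(u \setminus b)$, where $u \setminus b$ is the subword of $u$ obtained by removing the letter $b$.
\end{quote}
Taking $u = w_{a'}$ (so that $u \setminus b = v_1 v_2$), the lemma gives $P_{a'} \jdt b = P(v_1 v_2)$, and RSK-inserting $b$ into both sides yields $P_a = P(w_a)$, completing the induction.

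The auxiliary lemma is the classical compatibility between jeu-de-taquin and RSK rooted in Knuth equivalence: the row reading word of any standard tableau $T$ is Knuth equivalent to every word whose RSK insertion tableau is $T$; each elementary jeu-de-taquin slide preserves the Knuth equivalence class of the reading word; and the final deletion of the corner drops exactly the letter $b$ from the reading word. Hence the reading word of $P(u) \jdt b$ is Knuth equivalent to $u \setminus b$, and the two yield the same RSK insertion tableau. The main obstacle is verifying this auxiliary lemma carefully, as one must check how each jeu-de-taquin slide translates into a sequence of elementary Knuth relations on the reading word; this invokes Sch\"utzenberger's slide theory (see e.g.\ \cite[A1.2]{Sta2}). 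Once the lemma is in hand, the induction and the classical RSK descent identity together yield $\BS(w_a) = \Des(P_a)$.
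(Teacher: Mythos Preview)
Your reduction rests on the ``stronger identity'' $P_a = P(w_a)$, and this identity is \emph{false}. The paper's own Example~\ref{ex:DI} already furnishes a counterexample: for $n=6$ and $a=(3,5,2,3,2)$ one has $w_a=(1,4,6,5,3,2)$, whose RSK insertion tableau is
\[
P(w_a)=\begin{array}{ccc}1&2&5\\3\\4\\6\end{array}
\qquad\text{of shape }(3,1,1,1),
\]
whereas the delete--insert process produces $P_a$ of shape $(2,2,1,1)$. The two tableaux share the same descent set $\{2,3,5\}$, which is all the proposition asserts, but they are not equal.

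The failure propagates back to your auxiliary lemma. Take $u=w_{(3,5,2,3)}=(1,4,6,5,2,3)$ and $b=2$: then $P(u)\jdt 2$ has shape $(2,2,1)$, while $P(u\setminus 2)=P(1,4,6,5,3)$ has shape $(3,2)$. The Knuth-equivalence justification breaks because deleting a fixed letter does \emph{not} respect Knuth equivalence: from $u\equiv_K \mathrm{read}(P(u))$ one cannot conclude $u\setminus b\equiv_K \mathrm{read}(P(u))\setminus b$. The theorem that jeu-de-taquin slides preserve the Knuth class is stated for slides into inner or outer corners of a genuine skew shape $\lambda/\mu$; sliding a hole outward from the \emph{interior} of a straight shape is not covered, and indeed changes the Knuth class in general.

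The paper's proof sidesteps all of this by never comparing the tableaux themselves. It tracks, by induction on $r$, exactly how the \emph{sets} $\BS(w_a)$ and $\Des(P_a)$ change in the passage from $(a_1,\ldots,a_{r-1})$ to $(a_1,\ldots,a_r)$: moving $a_r$ to the far right forces $a_r-1\notin\BS$, $a_r\in\BS$, and leaves all other membership unchanged, and the jdt-then-RSK step has precisely the same effect on $\Des$. That local analysis is what you need to supply.
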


\begin{proof}  The proof is by induction on the length $r$ of $a = (a_1, \ldots, a_r)$.  If $r = 0$, then $w = \emptyset$ and $P_a = {\beginpicture
\setcoordinatesystem units <0.4cm,0.4cm>        
\setplotarea x from 1 to 7.3, y from -1 to 1   
\linethickness=0.5pt   
\putrectangle corners at 1  -.5 and  2 .5
\putrectangle corners at 2  -.5 and  3 .5
\putrectangle corners at 3  -.5 and  6 .5
\putrectangle corners at 6  -.5 and  7 .5
\put{$\scriptstyle{1}$} at 1.5 0
\put{$\scriptstyle{2}$} at 2.5 0
\put{$\cdots$} at 4.5 0
\put{$\scriptstyle{n}$} at 6.5 0
\endpicture}$.  In this case, $w_a = (1,2,\ldots,n)$ has $\BS(w_a) = \emptyset = \Des(P_a)$.

Now let $r >0$ and  $(a_1, \ldots, a_{r-1}) \DI  (P_{r-1},Q_{r-1}).$  Then $P_a = (P_{r-1} \jdt a_r) \RSK a_r$, and 
 by induction $\Des(P_{r-1}) = \BS( w_{(a_1, \ldots, a_{r-1})}).$
 By \eqref{iterativepi}, the permutation $w_{(a_1, \ldots, a_r)}$ is the same as  $w_{(a_1, \ldots, a_{r-1})}$ except that it has $a_r$ moved to the the rightmost position.   Since $a_r$ is now to the right of both $a_r-1$ and $a_r+1$, and this is the only changed made, we know that 
\begin{enumerate}
\item[(a)] $(a_r-1,  a_r)$ is  not a backstep  in $w_a$,
\item[(b)] $(a_r, a_r+1)$ is a backstep  in $w_a$, and
\item[(c)]  all other $(i,i+1)$ relationships are the same in $w_a$ as they were in $w_{(a_1, \ldots, a_{r-1})}$.
\end{enumerate}
These same relationships happen in $P$:
\begin{enumerate}
\item[(a')]  When $a_r$ is deleted from $P_{r-1}$ (via jeu-de-taquin) and then reinserted (via RSK), it ends up in the first row of $P$.  Thus $(a_{r}-1,a_r)$ is not a descent in $P$. 
\item[(b')]  If $a_{r}+1$ was in the first row of $P_{r-1}$ then $a_r$ bumps it to a lower row. Otherwise, it was already in a lower row, and either way $(a_{r},a_r+1)$ is a descent in $P$. 
\item[(c')]  Whenever $i$ gets bumped into the next row, if $i+1$ is in that row, $i$ will bump $i+1$ into a lower row.  So if $(i,i+1)$ is a descent it will remain a descent. If $(i,i+1)$ is not a descent, then we must consider the case when $i+1$ gets bumped lower than $i$. This only happens if $i$ and $i+1$ are in the same row.  But in this case a number that might bump $i+1$ would have to be lower than $i+1$ and thus lower than $i$. So it might potentially bump $i$ but it would not bump $i+1$.
\end{enumerate}
It follows by induction that $\Des(P_a) = \BS(w_a)$, as desired.  \end{proof}

The {inverse major index} $\imaj$ of a permutation $w \in S_n$ is the sum of the backsteps in $w$, and the major index $\maj$ of a standard tableau $P$ is the sum of the descents in $P$. That is,
\begin{equation}\label{def:majorindex}
\imaj(w) = \sum_{i \in \BS(w)}\! i \qquad \hbox{ and } \qquad  \maj(P) = \sum_{i \in \Des(P)}\!  i.
\end{equation}
Note that the major index of $w$ is  $\maj(w) = \sum_{i \in \Des(w)} i$, and $\imaj(w) = \maj(w^{-1})$. 
Let $q$ be an indeterminate (in Section 3 we will specialize $q$ to be a prime power).   For $\lambda \vdash n$, a $q$-analog of the hook number $f^\lambda_n$ is given by
\begin{equation}\label{qhook}
f^\lambda_n(q) = \sum_T q^{\maj(T)},
\end{equation}
where the sum is over all standard tableaux $T$ of shape $\lambda$.  Then $f^\lambda_n(q)$ is the dimension of the irreducible unipotent $\GL_n(\FF_q)$-module labeled by $\lambda$ and it is also given by the $q$-hook formula (see \cite[IV.6.7]{Mac}).

\begin{cor}\label{corsum}  For all $n,r \in \ZZ_{> 0}$, we have
$$
\sum_{a \in\{1,\ldots,n\}^r } q^{{\imaj}(w_a)} = 
\sum_{\lambda\in \Lambda_n^r} \sum_{(P,Q)} q^{{\maj}( P)} 
=  \sum_{\lambda\in \Lambda_n^r} f^\lambda_n(q) m^\lambda_r,
$$
where $(P,Q)$ ranges over all pairs consisting of a standard tableau $P$ of shape $\lambda$ and an $r$-vacillating tableau $Q$ of shape $\lambda$, and $w_a$ is defined in \eqref{wadef}.
\end{cor}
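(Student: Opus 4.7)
The proof is essentially a bookkeeping exercise that chains together the three ingredients assembled just above the corollary: the Schensted-type bijection \eqref{bijection}, the proposition just proved relating backsteps in $w_a$ to descents in $P_a$, and the definition \eqref{qhook} of the $q$-hook polynomial. There is no real obstacle to surmount; the task is simply to transport the statistic $\imaj$ across the bijection $a \DI (P_a,Q_a)$.

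First I would rewrite the left-hand sum by using the bijection \eqref{bijection} to reindex over pairs $(P,Q)$ where $P$ is a standard tableau of shape $\lambda\in\Lambda_n^r$ and $Q$ is an $r$-vacillating tableau of shape $\lambda$. Under this reindexing the sequence $a$ is replaced by the pair $(P_a,Q_a)$, so it only remains to see what the weight $q^{\imaj(w_a)}$ becomes. By the definition \eqref{def:majorindex} of $\imaj$ and the just-proved proposition,
$$
\imaj(w_a)\;=\;\sum_{i\in\BS(w_a)}\! i\;=\;\sum_{i\in\Des(P_a)}\! i\;=\;\maj(P_a),
$$
so the weight depends only on $P_a$. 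This delivers the first equality
$$
\sum_{a\in\{1,\ldots,n\}^r} q^{\imaj(w_a)}
\;=\;\sum_{\lambda\in\Lambda_n^r}\sum_{(P,Q)} q^{\maj(P)}.
$$

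For the second equality I would observe that, since the summand $q^{\maj(P)}$ is independent of the recording tableau $Q$, the inner double sum factors as
$$
\sum_{(P,Q)} q^{\maj(P)}
\;=\;\Bigl(\sum_P q^{\maj(P)}\Bigr)\cdot\#\{Q\text{ of shape }\lambda\}
\;=\;f^\lambda_n(q)\,m^\lambda_r,
$$
using the definition \eqref{qhook} of $f^\lambda_n(q)$ as a sum of $q^{\maj(T)}$ over standard tableaux $T$ of shape $\lambda$, and the fact established in Section~1 that $m^\lambda_r$ counts the $r$-vacillating tableaux of shape $\lambda$. Summing over $\lambda\in\Lambda_n^r$ completes the proof.
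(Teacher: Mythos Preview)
Your proof is correct and follows essentially the same approach as the paper: both use the delete-insert bijection \eqref{bijection} together with the preceding proposition to transport $q^{\imaj(w_a)}$ to $q^{\maj(P_a)}$, and then factor the resulting sum using the definition \eqref{qhook} of $f^\lambda_n(q)$ and the fact that $m^\lambda_r$ counts $r$-vacillating tableaux of shape $\lambda$. Your write-up is slightly more explicit about the intermediate step $\imaj(w_a)=\maj(P_a)$ and about why the inner sum factors, but the argument is the same.
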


\begin{proof} The first equality follows immediately from the fact that the delete-insert bijection \eqref{bijection} pairs $a \in \{1,\ldots n\}^r$ with $\{ (P_a,Q_a)\}$ and carries $q^{{\imaj}( w_a)}$ to $q^{{\maj}( P)}$.  The second equality follows from \eqref{qhook} and from the fact that $m_r^\lambda$ equals the number of $r$-vacillating tableaux $Q$ of shape $\lambda$.
\end{proof}

\begin{example} \label{ex:DI} The following table illustrates the process of delete-inserting the sequence $a = (3,5,2,3,2) \in \{1, \ldots, 6\}^5$ to produce a pair $(P_a,Q_a)$ of shape $\lambda = (2,2,1,1).$ The reader should observe that at each step in  this process the backsteps in $w_a$ equal the descents in $P_a$.
$$
\begin{tabular}{|c|c|cc|c|c|c|}
\hline
$i$ & $a_i$ & $P_a$ & & $a$ & $w_a$ & $\BS(w_a)$ = $\Des(P_a)$ \\
\hline \hline
$0$ &  & 
{\beginpicture
\setcoordinatesystem units <0.4cm,0.4cm>        
\setplotarea x from 1 to 7.3, y from -1 to 1   
\linethickness=0.5pt   
\putrectangle corners at 1  -.5 and  2 .5
\putrectangle corners at 2  -.5 and  3 .5
\putrectangle corners at 3  -.5 and  4 .5
\putrectangle corners at 4  -.5 and  5 .5
\putrectangle corners at 5  -.5 and  6 .5
\putrectangle corners at 6  -.5 and  7 .5
\put{$\scriptstyle{1}$} at 1.5 0
\put{$\scriptstyle{2}$} at 2.5 0
\put{$\scriptstyle{3}$} at 3.5 0
\put{$\scriptstyle{4}$} at 4.5 0
\put{$\scriptstyle{5}$} at 5.5 0
\put{$\scriptstyle{6}$} at 6.5 0
\endpicture} & & $\emptyset$& $(1,2,3,4,5,6)$ & $\emptyset$  \\
\hline
$\frac{1}{2}$ & $3$ & 
{\beginpicture
\setcoordinatesystem units <0.4cm,0.4cm>        
\setplotarea x from 1 to 7.3, y from -1 to 1   
\linethickness=0.5pt   
\putrectangle corners at 1  -.5 and  2 .5
\putrectangle corners at 2  -.5 and  3 .5
\putrectangle corners at 3  -.5 and  4 .5
\putrectangle corners at 4  -.5 and  5 .5
\putrectangle corners at 5  -.5 and  6 .5
\put{$\scriptstyle{1}$} at 1.5 0
\put{$\scriptstyle{2}$} at 2.5 0
\put{$\scriptstyle{4}$} at 3.5 0
\put{$\scriptstyle{5}$} at 4.5 0
\put{$\scriptstyle{6}$} at 5.5 0
\endpicture} & $\jdt 3$  & &  &  \\
$1$ &  & 
{\beginpicture
\setcoordinatesystem units <0.4cm,0.4cm>        
\setplotarea x from 1 to 7.3, y from -1 to 1   
\linethickness=0.5pt   
\putrectangle corners at 1  -.5 and  2 .5
\putrectangle corners at 2  -.5 and  3 .5
\putrectangle corners at 3  -.5 and  4 .5
\putrectangle corners at 4  -.5 and  5 .5
\putrectangle corners at 5  -.5 and  6 .5
\putrectangle corners at 1  -1.5 and  2 -.5
\put{$\scriptstyle{1}$} at 1.5 0
\put{$\scriptstyle{2}$} at 2.5 0
\put{$\scriptstyle{3}$} at 3.5 0
\put{$\scriptstyle{5}$} at 4.5 0
\put{$\scriptstyle{6}$} at 5.5 0
\put{$\scriptstyle{4}$} at 1.5 -1
\endpicture} & $\RSK 3$ & $(3)$ & $(1,2,4,5,6,3)$ & $\{3\}$   \\
\hline
$1\frac{1}{2}$ & $5$ & 
{\beginpicture
\setcoordinatesystem units <0.4cm,0.4cm>        
\setplotarea x from 1 to 7.3, y from -1 to 1   
\linethickness=0.5pt   
\putrectangle corners at 1  -.5 and  2 .5
\putrectangle corners at 2  -.5 and  3 .5
\putrectangle corners at 3  -.5 and  4 .5
\putrectangle corners at 4  -.5 and  5 .5
\putrectangle corners at 1  -1.5 and  2 -.5
\put{$\scriptstyle{1}$} at 1.5 0
\put{$\scriptstyle{2}$} at 2.5 0
\put{$\scriptstyle{3}$} at 3.5 0
\put{$\scriptstyle{6}$} at 4.5 0
\put{$\scriptstyle{4}$} at 1.5 -1
\endpicture} & $\jdt 5$  &  &  & \\
$2$ & & 
{\beginpicture
\setcoordinatesystem units <0.4cm,0.4cm>        
\setplotarea x from 1 to 7.3, y from -1 to 1   
\linethickness=0.5pt   
\putrectangle corners at 1  -.5 and  2 .5
\putrectangle corners at 2  -.5 and  3 .5
\putrectangle corners at 3  -.5 and  4 .5
\putrectangle corners at 4  -.5 and  5 .5
\putrectangle corners at 1  -1.5 and  2 -.5
\putrectangle corners at 2  -1.5 and  3 -.5
\put{$\scriptstyle{1}$} at 1.5 0
\put{$\scriptstyle{2}$} at 2.5 0
\put{$\scriptstyle{3}$} at 3.5 0
\put{$\scriptstyle{5}$} at 4.5 0
\put{$\scriptstyle{4}$} at 1.5 -1
\put{$\scriptstyle{6}$} at 2.5 -1
\endpicture} 
&  $\RSK 5$ & $(3,5)$ & $(1,2,4,6,3,5)$ & $\{3,5\}$   \\
\hline
$2\frac{1}{2}$ &  $2$  & 
{\beginpicture
\setcoordinatesystem units <0.4cm,0.4cm>        
\setplotarea x from 1 to 7.3, y from -1 to 1   
\linethickness=0.5pt   
\putrectangle corners at 1  -.5 and  2 .5
\putrectangle corners at 2  -.5 and  3 .5
\putrectangle corners at 3  -.5 and  4 .5
\putrectangle corners at 1  -1.5 and  2 -.5
\putrectangle corners at 2  -1.5 and  3 -.5
\put{$\scriptstyle{1}$} at 1.5 0
\put{$\scriptstyle{3}$} at 2.5 0
\put{$\scriptstyle{5}$} at 3.5 0
\put{$\scriptstyle{4}$} at 1.5 -1
\put{$\scriptstyle{6}$} at 2.5 -1
\endpicture}
& $\jdt 2$  &  &  & \\
$3$ &  & 
{\beginpicture
\setcoordinatesystem units <0.4cm,0.4cm>        
\setplotarea x from 1 to 7.3, y from -1 to 1   
\linethickness=0.5pt   
\putrectangle corners at 1  -.5 and  2 .5
\putrectangle corners at 2  -.5 and  3 .5
\putrectangle corners at 3  -.5 and  4 .5
\putrectangle corners at 1  -1.5 and  2 -.5
\putrectangle corners at 2  -1.5 and  3 -.5
\putrectangle corners at 1  -2.5 and  2 -1.5
\put{$\scriptstyle{1}$} at 1.5 0
\put{$\scriptstyle{2}$} at 2.5 0
\put{$\scriptstyle{5}$} at 3.5 0
\put{$\scriptstyle{3}$} at 1.5 -1
\put{$\scriptstyle{6}$} at 2.5 -1
\put{$\scriptstyle{4}$} at 1.5 -2
\endpicture}
& $\RSK 2$ & $(3,5,2)$ & $(1,4,6,3,5,2)$ & $\{2,3,5\}$  \\
\hline
$3\frac{1}{2}$ & $3$ & 
{\beginpicture
\setcoordinatesystem units <0.4cm,0.4cm>        
\setplotarea x from 1 to 7.3, y from -1 to 1   
\linethickness=0.5pt   
\putrectangle corners at 1  -.5 and  2 .5
\putrectangle corners at 2  -.5 and  3 .5
\putrectangle corners at 3  -.5 and  4 .5
\putrectangle corners at 1  -1.5 and  2 -.5
\putrectangle corners at 2  -1.5 and  3 -.5
\put{$\scriptstyle{1}$} at 1.5 0
\put{$\scriptstyle{2}$} at 2.5 0
\put{$\scriptstyle{5}$} at 3.5 0
\put{$\scriptstyle{4}$} at 1.5 -1
\put{$\scriptstyle{6}$} at 2.5 -1
\endpicture}
& $\jdt 3$ &  &  &  \\
$4$ &  & 
{\beginpicture
\setcoordinatesystem units <0.4cm,0.4cm>        
\setplotarea x from 1 to 7.3, y from -1 to 1   
\linethickness=0.5pt   
\putrectangle corners at 1  -.5 and  2 .5
\putrectangle corners at 2  -.5 and  3 .5
\putrectangle corners at 3  -.5 and  4 .5
\putrectangle corners at 1  -1.5 and  2 -.5
\putrectangle corners at 2  -1.5 and  3 -.5
\putrectangle corners at 1  -2.5 and  2 -1.5
\put{$\scriptstyle{1}$} at 1.5 0
\put{$\scriptstyle{2}$} at 2.5 0
\put{$\scriptstyle{3}$} at 3.5 0
\put{$\scriptstyle{4}$} at 1.5 -1
\put{$\scriptstyle{5}$} at 2.5 -1
\put{$\scriptstyle{6}$} at 1.5 -2
\endpicture}
& $\RSK 3$ & $(3,5,2,3)$ & $(1,4,6,5,2,3)$ & $\{3,5\}$   \\
\hline
$4\frac{1}{2}$ & $2$  & 
{\beginpicture
\setcoordinatesystem units <0.4cm,0.4cm>        
\setplotarea x from 1 to 7.3, y from -1 to 1   
\linethickness=0.5pt   
\putrectangle corners at 1  -.5 and  2 .5
\putrectangle corners at 2  -.5 and  3 .5
\putrectangle corners at 1  -1.5 and  2 -.5
\putrectangle corners at 2  -1.5 and  3 -.5
\putrectangle corners at 1  -2.5 and  2 -1.5
\put{$\scriptstyle{1}$} at 1.5 0
\put{$\scriptstyle{3}$} at 2.5 0
\put{$\scriptstyle{4}$} at 1.5 -1
\put{$\scriptstyle{5}$} at 2.5 -1
\put{$\scriptstyle{6}$} at 1.5 -2
\endpicture}
& $\jdt 2$&  &  &  \\
$5$ &  & 
{\beginpicture
\setcoordinatesystem units <0.4cm,0.4cm>        
\setplotarea x from 1 to 7.3, y from -1 to 1   
\linethickness=0.5pt   
\putrectangle corners at 1  -.5 and  2 .5
\putrectangle corners at 2  -.5 and  3 .5
\putrectangle corners at 1  -1.5 and  2 -.5
\putrectangle corners at 2  -1.5 and  3 -.5
\putrectangle corners at 1  -2.5 and  2 -1.5
\putrectangle corners at 1  -3.5 and  2 -2.5
\put{$\scriptstyle{1}$} at 1.5 0
\put{$\scriptstyle{2}$} at 2.5 0
\put{$\scriptstyle{3}$} at 1.5 -1
\put{$\scriptstyle{5}$} at 2.5 -1
\put{$\scriptstyle{4}$} at 1.5 -2
\put{$\scriptstyle{6}$} at 1.5 -3
\endpicture}
&$\RSK 2$ & $(3,5,2,3,2)$  & $(1,4,6,5,3,2)$ & $\{2,3,5\}$   \\
\hline
\end{tabular}
$$
\end{example}

\end{subsection}

\newpage

\begin{subsection}{Set Partitions and Major Index}

For an integer $i \ge 0$, define
\begin{equation}
[i] = \frac{q^i-1}{q-1} = q^{i-1} + q^{i-2} + \cdots + 1,
\end{equation}
so that $[i]_{q=1} = i$.  Recall that the \emph{Stirling number} $S(r, \ell)$ is the number of set partitions of a set of size $r$ into $\ell$ subsets.  We now compute the sum that appears in Corollary \ref{corsum}.

\begin{prop} \label{propsum} For $r,n \in \ZZ_{>0},$
$$
\displaystyle{ \sum_{a\in \{ 1,\ldots, n\}^r} q^{{\imaj}(w_a)} =\sum_{\ell = 1}^n  S(r,\ell)  [n] [n-1] \cdots [n-\ell+1]}.
$$

\end{prop}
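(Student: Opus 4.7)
\emph{Plan.} I will organize the left-hand sum by the set partition $\pi(a)$ of $\{1,\ldots,r\}$ determined by the rule $i\sim j\iff a_i=a_j$. The proposition will follow once I establish that, for every set partition $\pi$ with $\ell$ blocks, the ``inner'' sum $\sum_{a:\,\pi(a)=\pi} q^{\imaj(w_a)}$ equals $[n][n-1]\cdots[n-\ell+1]$, independently of $\pi$; since there are $S(r,\ell)$ such partitions of $\{1,\ldots,r\}$, this will give the claimed formula.

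\emph{Step 1: the inner sum depends only on $\ell$.} Fix $\pi$ and label its blocks $B_1,\ldots,B_\ell$ so that $\max B_1<\cdots<\max B_\ell$. The sequences with $\pi(a)=\pi$ are in bijection with injections $f:\{B_1,\ldots,B_\ell\}\hookrightarrow\{1,\ldots,n\}$ via $a_j=f(B_i)$ for $j\in B_i$. According to the iterative description \eqref{iterativepi}, each step $w^{(i)}\to w^{(i+1)}$ deletes $a_{i+1}$ from $w^{(i)}$ and appends it on the right, so later occurrences of a repeated value completely supersede earlier ones. Therefore $w_a$ depends only on the distinct values of $a$, listed in the order of their last appearance in the sequence; under our labeling, the last appearance of $f(B_i)$ is at position $\max B_i$, so this order is $(f(B_1),f(B_2),\ldots,f(B_\ell))$. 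Consequently
$$
\sum_{a:\,\pi(a)=\pi} q^{\imaj(w_a)}\;=\;E_{n,\ell}(q)\;:=\;\sum_{(u_1,\ldots,u_\ell)} q^{\imaj(w_{(u_1,\ldots,u_\ell)})},
$$
where the right-hand sum ranges over ordered tuples of $\ell$ distinct elements of $\{1,\ldots,n\}$, and $w_{(u_1,\ldots,u_\ell)}$ is the permutation of $\{1,\ldots,n\}$ assigned to the tuple by the procedure of \eqref{iterativepi}.

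\emph{Step 2: evaluate $E_{n,\ell}(q)$.} From the construction, $w_{(u_1,\ldots,u_\ell)}$ is the permutation whose first $n-\ell$ entries list $\{1,\ldots,n\}\setminus\{u_1,\ldots,u_\ell\}$ in increasing order and whose last $\ell$ entries are $u_1,\ldots,u_\ell$. As the tuple varies, these $w$'s are exactly the permutations $w\in S_n$ with $w_1<\cdots<w_{n-\ell}$. Setting $\sigma=w^{-1}$ and using $\imaj(w)=\maj(\sigma)$, this condition translates to: the values $1,2,\ldots,n-\ell$ appear in $\sigma$ as an increasing subsequence. Such $\sigma$ are precisely the shuffles of the increasing word $\alpha=1\,2\cdots(n-\ell)$ with an arbitrary permutation $\beta$ of $\{n-\ell+1,\ldots,n\}$. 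The classical $\maj$-shuffle identity (Garsia; cf.\ MacMahon) for words with disjoint letter sets states that
$$
\sum_{\gamma}\, q^{\maj(\gamma)}\;=\;q^{\maj(\alpha)+\maj(\beta)}\binom{n}{\ell}_q\;=\;q^{\maj(\beta)}\binom{n}{\ell}_q,
$$
where $\gamma$ runs over all shuffles of $\alpha$ and $\beta$; summing further over $\beta$ and using $\sum_\beta q^{\maj(\beta)}=[\ell]!$ gives $E_{n,\ell}(q)=\binom{n}{\ell}_q\,[\ell]!=[n][n-1]\cdots[n-\ell+1]$.

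\emph{Assembly and main obstacle.} Summing Step 1 over all set partitions $\pi$ of $\{1,\ldots,r\}$ and grouping by the block count $\ell=\ell(\pi)$ yields
$$
\sum_{a\in\{1,\ldots,n\}^r} q^{\imaj(w_a)}\;=\;\sum_{\ell=1}^{n} S(r,\ell)\,E_{n,\ell}(q)\;=\;\sum_{\ell=1}^{n} S(r,\ell)\,[n][n-1]\cdots[n-\ell+1],
$$
which is the desired identity. The conceptual crux is Step 1: one must verify cleanly that $w_a$ really depends only on the last-occurrence ordering of the distinct values in $a$, for this is what makes the inner sum ``universal'' in $\pi$. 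Once that is in place, Step 2 reduces to a standard $\maj$-shuffle identity, and this is where the acknowledged conversation with Reiner and Stanton on the joint distribution of $\inv$ and $\imaj$ enters the argument.
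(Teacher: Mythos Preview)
Your proof is correct. Step~1 matches the paper's argument exactly: both group the sum by the set partition $\pi(a)=\shape(a)$ and observe that the map $a\mapsto w_a$ depends only on the distinct values of $a$ in order of last appearance, establishing a bijection between $\{a:\pi(a)=\pi\}$ and the coset representatives $D_{n,n-\ell}$.

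The genuine difference is in Step~2, the evaluation of $\sum_{w\in D_{n,n-\ell}}q^{\imaj(w)}$. The paper converts $\imaj$ to $\inv$ using the Foata--Sch\"utzenberger theorem that $\imaj$ and $\inv$ are equidistributed on each shape class of $S_n$, and then uses the multiplicativity $\inv(uv)=\inv(u)+\inv(v)$ for $u\in D_{n,t}$, $v\in S_t$ to deduce $\sum_{w\in D_{n,t}}q^{\inv(w)}=[n]!/[t]!$. You instead pass to $\sigma=w^{-1}$, recognize the resulting set of $\sigma$ as all shuffles of $12\cdots(n-\ell)$ with a permutation of $\{n-\ell+1,\ldots,n\}$, and invoke the Garsia $\maj$-shuffle identity. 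Both routes are standard; yours avoids the shape-class equidistribution result but trades it for the shuffle theorem, which is arguably of comparable depth. Your closing remark about where the Reiner--Stanton conversation enters is a guess that turns out to be slightly off: in the paper it is the Foata--Sch\"utzenberger step, not a shuffle identity, that is credited to that discussion.
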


\begin{proof} We begin by classifying the permutations $w_a$ that appear in the sum.  For each sequence $a = (a_1, \ldots, a_r)\in \{1,\ldots,n\}^r$ we define $\shape(a)$ to be the set partition of $\{1, \ldots, r\}$ given by the rule,
$$
\hbox{$i \sim j$ in $\shape(a)$ \quad if and only if \quad $a_i = a_j$ in $a$}.
$$
We also let
$$
D_{n,t} = \{\ w = (w_1, \ldots, w_n) \in S_n \ \vert \ w_1 < w_2 < \ldots < w_{t} \ \},
$$
be a distinct set of minimal-length coset representatives of $S_n/S_t$,  where we naturally embed $S_t \subseteq S_n$ as the permutations of $\{1, \ldots, t\}$. From this construction, we immediately have,
$$
\hbox{if \quad $\shape(a)$ has $\ell$ parts \quad then \quad  $w_a \in D_{n,n-\ell}$}. 
$$
For example, if $n = 6$, $r=10$, and $a = (2,1,3,1,6,2, 6,1, 3, 1)$, then
\begin{align*}
a & = (\underbrace{2,1,3,1,6,{\bf 2}, {\bf 6},1, {\bf 3}, {\bf 1}}_{\text{$\ell = 4$ distinct entries}})
\qquad \Rightarrow \qquad w_a   = (
\underbrace{4,5}_{n-\ell=2}, \underbrace{2,6,3,1}_{\ell=4}) \in D_{6,2},  \\
\shape(a) & = \shape(2,1,3,1,6,{ 2}, { 6},1, { 3}, { 1})  = 
\underbrace{ \{1,6\} \cup \{ 2,4,8,10\} \cup \{3,9\} \cup \{5,7\} }_{\ell=4 \text{ parts}}. 
\end{align*}
Note that the number of possible parts $\ell$ in $\shape(a)$ is bounded both by the number $r$ of subscripts and the number $n$ of possible choices of $a_i$. 

For a fixed set partition $K$ with $\ell$ parts and a fixed permutation $w \in D_{n,n-\ell}$ we can easily reconstruct the unique sequence $a \in \{1,\ldots, n\}^r$ with $\ell$ distinct entries such that $\shape(a) = K$ and $w_a = w$.  Thus, if we let ${\cal P}_r^\ell$ be the set partitions of $\{1, \ldots, r \}$ with $\ell$ parts, then
\begin{align*}
\sum_{a\in \{1, \ldots, n\}^r} q^{{\imaj}(w_a)} 
& = \sum_{\ell = 1}^{\min(r,n)} \sum_{K \in {\cal P}_r^\ell}  \sum_{a \in \{1,\ldots, n\}^r \atop \shape(a) = K} q^{{\imaj}(w_a)} 
 = \sum_{\ell = 1}^{\min(n,r)} \sum_{K \in {\cal P}_r^\ell}  \sum_{w \in D_{n,n-\ell}} q^{{\imaj}(w)} \\
& = \sum_{\ell = 1}^{\min(n,k)} S(r,\ell)  \sum_{w\in D_{n,n-\ell}} q^{{\imaj}(w)},
\end{align*}
where the last equality comes from the fact that the Stirling number $S(r,\ell)$ is the number of partitions of $\{1, \ldots, r\}$ into $\ell$ parts. 

To finish the proof of the proposition, we will show that 
\begin{equation}\label{FirstReduction}
\sum_{w \in D_{n,t}} q^{{\imaj}(w)}  = [n][n-1] \cdots [t + 1], \qquad 0 \le t < n.
\end{equation}
The shape of a permutation  $w$ is the composition  $\mu = (\mu_1, \ldots, \mu_\ell)$ of $n$ where $\mu_1$ is the first position $i$ where $w_i > w_{i+1}$, $\mu_1 + \mu_2$ is the next position $i$ where $w_i > w_{i+1}$ and so on.  
The sum in \eqref{FirstReduction} is over all partitions whose shape $\mu$ satisfies $\mu_1 \ge t$.  An inversion in a permutation $w$ is a pair $(i,j)$ such that $i < j$ and $w_i > w_j$ and $\inv(w)$ is the number of inversions in $w$.   Foata and Sch\"utzenberger \cite{FS} (see also \cite[Theorem 11.4.4]{Lo}) prove that the number of permutations of shape $\mu$ having $m$ inversions equals the number of permutations of shape $\mu$ having $m$ backsteps.  Thus, 
\begin{equation}
\sum_{w \in D_{n,t}} q^{{\imaj}(w)}  = \sum_{w \in D_{n,t}} q^{\inv(w)}. 
\end{equation}
Now, our coset representatives $D_{n,t}$ for $S_n/S_t$ are chosen with minimal length, so if $u \in D_{n,t}$ and $v \in S_t$, then $\inv(uv) = \inv(u) + \inv(v)$.  Thus, 
$$
[n]! = \sum_{s \in S_n} q^{\inv(w)} = \sum_{u \in D_{n,t}} \sum_{v \in S_t} q^{\inv (uv)} = \sum_{u \in D_{n,t}} q^{\inv(u)} \sum_{v \in S_t} q^{\inv (v)} =   \sum_{u \in D_{n,t}} q^{\inv(u)} [t]!,
$$
where the first and last equalities come from the  well-known result of MacMahon (see \cite[Cor 1.3.10]{Sta1}) that $\sum_{w \in S_n} q^{\inv(w)} = [n]!$.  Equation \eqref{FirstReduction} follows by dividing by $[t]!$ and replacing $\inv$ with $\imaj$. 
\end{proof}

For $n,r \in \ZZ_{>0}$, define,
\begin{equation}\label{PolynomialDefinition}
d_{n,r}(q) = \sum_{\ell = 1}^n  S(r,\ell)  [n] [n-1] \cdots [n-\ell+1].
\end{equation}
The first few values of $d_{n,r}(q)$, for increasing $r$,  are given by
\begin{align*}
d_{n,0}(q)  & = 1, \\
d_{n,1}(q)  & = [n],  \\
d_{n,2}(q) & =  [n]([n-1]+1), \\
d_{n,3}(q) & =  [n] (1 + 3 [n-1] + [n-1] [n-2]), \\
d_{n,4}(q) &=  [n](1 + 7 [n-1] + 6 [n-1][n-2] + [n-1][n-2][n-3]). 
\end{align*}
When $q=0$, we have $[j]_{q=0} = 1$, so $d_{n,r}(0) = \sum_{\ell = 0}^n S(r,\ell)$, which equals the $r$th Bell number $B(r)$ if $n\ge r$ and which is the number of set partitions of $\{1, \ldots, r\}$ into at most $n$ subsets if $n < r$.   When $q=1$ the sum in Proposition \ref{propsum} shows that $d_{n,r}(1)$ equals the cardinality of $\{1, \ldots, n\}^r$, so $d_{n,r}(1) = n^r$. 
These polynomials are tantalizingly close to those in the following identity of Garsia and Remmel \cite[I.17]{GR}
$$
\sum_{\ell = 1}^r S(r,\ell;q) [n][n-1] \cdots [n-r+1] = [n]^r,
$$
where $S(r,\ell;q)$ is a $q$-analog of the Stirling number $S(r,\ell)$. Like $d_{n,r}(q)$, these Garsia--Remmel polynomials specialize at $q=1$ to $n^r$, but they are different at $q=0$, since $[n]^r\vert_{q=0} =1$.

The next Corollary follows immediately from Corollary \ref{corsum} and Proposition \ref{propsum}.

\begin{cor}  For $n,r \in \ZZ_{>0}$, we have
$$
d_{n,r}(q) = \sum_{\ell = 1}^n  S(r,\ell)  [n] [n-1] \cdots [n-\ell+1] = \sum_{\lambda\in \Lambda_r^n} f^\lambda_n(q) m^\lambda_r.
$$
\end{cor}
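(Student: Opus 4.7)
The plan is to chain the three prior identities together. By definition \eqref{PolynomialDefinition}, the polynomial $d_{n,r}(q)$ is literally the sum $\sum_{\ell=1}^n S(r,\ell)[n][n-1]\cdots[n-\ell+1]$, so the first equality in the corollary is tautological and requires no work.

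For the second equality, I would apply Proposition \ref{propsum}, which rewrites the Stirling-number expression as the $\imaj$-weighted sum
\[
\sum_{a\in\{1,\ldots,n\}^r} q^{\imaj(w_a)}
\]
over integer sequences. Then, invoking Corollary \ref{corsum}, this same sum transforms via the delete-insert bijection \eqref{bijection} into $\sum_{\lambda\in\Lambda_n^r} f^\lambda_n(q)\, m^\lambda_r$, since the proposition preceding Corollary \ref{corsum} showed that $\imaj(w_a)=\maj(P_a)$ and since the $q$-hook formula \eqref{qhook} packages the $\maj$-weighted enumeration of standard tableaux of shape $\lambda$ as $f^\lambda_n(q)$.

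There is essentially no obstacle here beyond careful bookkeeping: the statement is a formal composition of Proposition \ref{propsum} with Corollary \ref{corsum}, both already established, together with the definition of $d_{n,r}(q)$. The only mildly delicate point is making sure the index set written in the corollary as $\Lambda_r^n$ agrees with the $\Lambda_n^r$ appearing in the earlier statements (which it does, modulo what looks like a harmless typographical transposition of sub- and superscript). No further combinatorial argument is needed.
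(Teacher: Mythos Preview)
Your proposal is correct and matches the paper's own treatment exactly: the paper simply states that the corollary follows immediately from Corollary~\ref{corsum} and Proposition~\ref{propsum}, together with the definition~\eqref{PolynomialDefinition} of $d_{n,r}(q)$. Your observation about $\Lambda_r^n$ versus $\Lambda_n^r$ is also on point; it is indeed just a typographical slip.
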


\end{subsection}

\end{section}

\begin{section}{A Basis for the $\IR$ Module for $\GL_n(\FF_q)$}

In this section we construct a module $\IR_q^r$ for the finite general linear group $\GL_n(\FF_q)$ using $r$ iterations of Harish-Chandra restriction and induction.  We find a basis for $\IR_q^r$ that is indexed by  $q$-set partitions of $\{1, \ldots, r\}$.  It is easy to see that the number of these is  the polynomial $d_{n,r}(q)$ which appeared in Section 2, and so
$\dim(\IR_q^r) = d_{n,r}(q)$. The module $\IR_q^r$ is the defining space for the $q$-partition algebra, which will be analyzed in a subsequent paper by T.\ Halverson, A.\ Ram, and N.\ Thiem.

\subsection{A family of $q$-analogues to set partitions}\label{SectionqSetPartitions}

Let
$$\ZZ_{n}^r=\{(k_1,k_2,\ldots, k_r)\in \ZZ^r\mid 0\leq k_1,k_2,\ldots,k_r\leq n-1\},$$
which we can think of as a configuration of boxes stacked into an $(n-1)\times r$ rectangle.  That is, $(k_1,k_2,\ldots, k_r)$ denotes the collection of boxes with $k_j$ boxes stacked in the $j$th column.  For example,
$$\xy<0cm,1.5cm>\xymatrix@R=.5cm@C=.5cm{
*={} \ar @{|-|} @<-.15cm> [ddddd]_{n-1=5} & *={} & *={} & *={} & *={} & *={} & *={} & *={} \ar @{-}[lllllll]\\
*={} & *={} & *={} \ar @{-} [l] & *={} & *={} & *={} & *={} \ar @{-} [l] & *={}\\
*={} & *={} & *={} \ar @{-} [l]  & *={} & *={}  & *={} & *={} \ar @{-} [l] & *={} \\
*={} & *={} & *={} \ar @{-} [l]  & *={} \ar @{-} [l]   & *={} & *={} \ar @{-} [l] & *={} \ar @{-} [l] & *={} \ar @{-} [l] \\
*={} & *={} \ar @{-} [l] & *={} \ar @{-} [l] & *={}  \ar @{-} [l]  & *={}  & *={} \ar @{-} [l] & *={} \ar @{-} [l] & *={}\ar @{-} [l] \\
*={}  \ar @{-} [uuuuu] \ar @{|-|} @<-.15cm> [rrrrrrr]_{r=7} & *={} \ar @{-} [uuuu]  & *={} \ar @{-} [uuuu]  & *={}  \ar @{-} [uu] & *={} \ar@{-} [uu]  & *={} \ar @{-} [uuuuu]  & *={} \ar @{-} [uuuuu]  & *={}\ar @{-}[lllllll] \ar @{-} [uuuuu] }\endxy\leftrightarrow (1,4,2,0,2,5,2).$$
Let
$$\mathcal{P}_{n\times r}=\{(k_1,k_2,\ldots, k_r)\in\ZZ_n^r \mid\ k_1=0, k_j=h\text{ implies $k_i=h-1$ for some $i<j$}\}.$$
We have a surjection
$$\begin{array}{ccc} \ZZ_n^r & \longrightarrow & \mathcal{P}_{n\times r}\\ k  & \mapsto & k^*,\end{array}\qquad\text{where $k_1^*=0$ and $k_j^*=\min\big\{k_j,\max\{k_1^*+1,k_2^*+1,\ldots,k_{j-1}^*+1\}\big\}$,}$$
which sends
$$k=\xy<0cm,1.5cm>\xymatrix@R=.5cm@C=.5cm{
*={} & *={} & *={} & *={} & *={} & *={} & *={} & *={} \ar @{-}[lllllll]\\
*={} & *={} & *={} \ar @{-} [l] & *={} & *={} & *={} & *={} \ar @{-} [l] & *={}\\
*={} & *={} & *={} \ar @{-} [l]  & *={} & *={}  & *={} & *={} \ar @{-} [l] & *={} \\
*={} & *={} & *={} \ar @{-} [l]  & *={} \ar @{-} [l]   & *={} & *={} \ar @{-} [l] & *={} \ar @{-} [l] & *={} \ar @{-} [l] \\
*={} & *={} \ar @{-} [l] & *={} \ar @{-} [l] & *={}  \ar @{-} [l]  & *={}  & *={} \ar @{-} [l] & *={} \ar @{-} [l] & *={}\ar @{-} [l] \\
*={}  \ar @{-} [uuuuu]  & *={} \ar @{-} [uuuu]  & *={} \ar @{-} [uuuu]  & *={}  \ar @{-} [uu] & *={} \ar@{-} [uu]  & *={} \ar @{-} [uuuuu]  & *={} \ar @{-} [uuuuu]  & *={}\ar @{-}[lllllll] \ar @{-} [uuuuu] }\endxy \longmapsto
k^*=\xy<0cm,1.5cm>\xymatrix@R=.5cm@C=.5cm{
*={} & *={} & *={} & *={} & *={} & *={} & *={} & *={} \ar @{-}[lllllll]\\
*={} & *={} & *={}  & *={} & *={} & *={} & *={}& *={}\\
*={} & *={} & *={} & *={} & *={}  & *={} & *={}  \ar @{-} [l]  & *={}\\
*={} & *={} & *={}  & *={} \ar @{-} [l]   & *={} & *={} \ar @{-} [l] & *={} \ar @{-} [l] & *={} \ar @{-} [l] \\
*={} & *={} & *={} \ar @{-} [l] & *={}  \ar @{-} [l]  & *={}  & *={} \ar @{-} [l] & *={} \ar @{-} [l] & *={}\ar @{-} [l] \\
*={}  \ar @{-} [uuuuu]  & *={} \ar @{-} [u]  & *={} \ar @{-} [uu]  & *={}  \ar @{-} [uu] & *={} \ar@{-} [uu]  & *={} \ar @{-} [uuu]  & *={} \ar @{-} [uuu]  & *={}\ar @{-}[lllllll] \ar @{-} [uuuuu] }\endxy
$$
We will refer to $k^*$ as the \emph{$\ast$-height} of $k$.

There is a bijection,
$$\begin{array}{ccc} \mathcal{P}_{n\times r}&\longleftrightarrow&\left\{\begin{array}{c}\text{Set partitions of $\{1,2,\ldots,r\}$}\\ \text{with at most $n$ parts}\end{array}\right\} \\ k=(k_1,k_2,\ldots, k_r) & \mapsto & K_k,\end{array} $$
where $i$ and $j$ are in the same part of $K_k$ if and only if $k_i=k_j$.  That is,
$$\xy<0cm,1.5cm>\xymatrix@R=.5cm@C=.5cm{
*={} & *={} & *={} & *={} & *={} & *={} & *={} & *={} \ar @{-}[lllllll]\\
*={} & *={} & *={}  & *={} & *={} & *={} & *={}& *={}\\
*={} & *={} & *={} & *={} & *={}  & *={} & *={}  \ar @{-} [l]  & *={}\\
*={} & *={} & *={}  & *={} \ar @{-} [l]   & *={} & *={} \ar @{-} [l] & *={} \ar @{-} [l] & *={} \ar @{-} [l] \\
*={} & *={} & *={} \ar @{-} [l] & *={}  \ar @{-} [l]  & *={}  & *={} \ar @{-} [l] & *={} \ar @{-} [l] & *={}\ar @{-} [l] \\
*={}  \ar @{-} [uuuuu]  & *={} \ar @{-} [u]  \ar @{} [l]^{1} & *={} \ar @{-} [uu]  \ar @{} [l]^{2} & *={}  \ar @{-} [uu] \ar @{} [l]^{3} & *={} \ar@{-} [uu]  \ar @{} [l]^{4}  & *={} \ar @{-} [uuu]  \ar @{} [l]^{5} & *={} \ar @{-} [uuu] \ar @{} [l]^{6}  & *={}\ar @{-}[lllllll] \ar @{-} [uuuuu] \ar @{} [l]^{7} }\endxy\longmapsto \{1,4\}\cup\{2\}\cup\{3,5,7\}\cup\{6\}.$$

To obtain $q$-analogues, fill the boxes with elements of $\FF_q$.  Let
$$\ZZ_n^r(q)=\{((k_1,a^{(1)}),(k_2,a^{(2)}),\ldots,(k_r, a^{(r)}))\mid (k_1,k_2,\ldots,k_r)\in \ZZ_n^r,a^{(j)}\in \FF_q^{k_j}\},$$
For example,
$$\xy<0cm,1.3cm>\xymatrix@R=.5cm@C=.5cm{
*={} & *={} & *={} & *={} & *={} & *={} & *={} & *={} \ar @{-}[lllllll]\\
*={} & *={} & *={} \ar @{-} [l] & *={} & *={} & *={} & *={} \ar @{-} [l] \ar @{} [ul]|{e_5}  & *={}\\
*={} & *={} & *={} \ar @{-} [l]  \ar @{} [ul]|{b_4} & *={} & *={}  & *={} & *={} \ar @{-} [l] \ar @{} [ul]|{e_4}  & *={}\\
*={} & *={} & *={} \ar @{-} [l]  \ar @{} [ul]|{b_3} & *={} \ar @{-} [l]   & *={} & *={} \ar @{-} [l] & *={} \ar @{-} [l] \ar @{} [ul]|{e_3} & *={} \ar @{-} [l] \\
*={} & *={} \ar @{-} [l] & *={} \ar @{-} [l] \ar @{} [ul]|{b_2} & *={}  \ar @{-} [l]  \ar @{} [ul]|{c_2} & *={}  & *={} \ar @{-} [l] \ar @{} [ul]|{d_2} & *={} \ar @{-} [l] \ar @{} [ul]|{e_2} & *={}\ar @{-} [l] \ar @{} [ul]|{f_2} \\
*={}  \ar @{-} [uuuuu]  & *={} \ar @{-} [uuuu] \ar @{} [ul]|{a}  & *={} \ar @{-} [uuuu]  \ar @{} [ul]|{b_1} & *={}  \ar @{-} [uu] \ar @{} [ul]|{c_1} & *={} \ar@{-} [uu]  & *={} \ar @{-} [uuuuu]  \ar @{} [ul]|{d_1} & *={} \ar @{-} [uuuuu]  \ar @{} [ul]|{e_1} & *={}\ar @{-}[lllllll] \ar @{-} [uuuuu]\ar @{} [ul]|{f_1}  }\endxy\leftrightarrow \left((1,[a]),(4,\left[\begin{array}{@{}c@{}} b_4\\b_3\\b_2\\b_1\end{array}\right]),(2,\left[\begin{array}{@{}c@{}}c_2\\ c_1\end{array}\right]),(0,()),(2,\left[\begin{array}{@{}c@{}}d_2\\ d_1\end{array}\right]),(5,\left[\begin{array}{@{}c@{}}e_5\\ e_4\\e_3\\e_2\\e_1\end{array}\right]),(2,\left[\begin{array}{@{}c@{}}f_2\\ f_1\end{array}\right])\right).$$
The $q$-analogue of set partitions of $\{1,2,\ldots, r\}$ with at most $n$ parts is the set
$$\mathcal{P}_{n\times r}(q)=\{((k_1,a^{(1)}),(k_2,a^{(2)}),\ldots,(k_r, a^{(r)}))\mid (k_1,k_2,\ldots,k_r)\in \ZZ_n^r, a^{(j)}\in \FF_q^{k_j-k_j^*}\}.$$
For example,
$$\xy<0cm,1.3cm>\xymatrix@R=.5cm@C=.5cm{
*={} & *={} & *={} & *={} & *={} & *={} & *={} & *={} \ar @{-}[lllllll]\\
*={} & *={} & *={} \ar @{-} [l] & *={} & *={} & *={} & *={} \ar @{-} [l] \ar @{} [ul]|{e_5}  & *={}\\
*={} & *={} & *={} \ar @{-} [l]  \ar @{} [ul]|{b_4} & *={} & *={}  & *={} & *={} \ar @{-} [l] \ar @{} [ul]|{e_4}  & *={}\\
*={} & *={} & *={} \ar @{-} [l]  \ar @{} [ul]|{b_3} & *={} \ar @{-} [l]   & *={} & *={} \ar @{-} [l] & *={} \ar @{-} [l] \ar @{} [ul]|{\ast} & *={} \ar @{-} [l] \\
*={} & *={} \ar @{-} [l] & *={} \ar @{-} [l] \ar @{} [ul]|{b_2} & *={}  \ar @{-} [l]  \ar @{} [ul]|{\ast} & *={}  & *={} \ar @{-} [l] \ar @{} [ul]|{\ast} & *={} \ar @{-} [l] \ar @{} [ul]|{\ast} & *={}\ar @{-} [l] \ar @{} [ul]|{\ast} \\
*={}  \ar @{-} [uuuuu]  & *={} \ar @{-} [uuuu] \ar @{} [ul]|{a}  & *={} \ar @{-} [uuuu]  \ar @{} [ul]|{\ast} & *={}  \ar @{-} [uu] \ar @{} [ul]|{\ast} & *={} \ar@{-} [uu]  & *={} \ar @{-} [uuuuu]  \ar @{} [ul]|{\ast} & *={} \ar @{-} [uuuuu]  \ar @{} [ul]|{\ast} & *={}\ar @{-}[lllllll] \ar @{-} [uuuuu]\ar @{} [ul]|{\ast}  }\endxy\in \mathcal{P}_{6\times 7}(q),$$
where the boxes labeled by $\ast$ give the $\ast$-height for the associated element in $\ZZ_n^r$.

An \emph{$n$-restricted $q$-set partition of $\{1,2,\ldots, r\}$} is an element of $\mathcal{P}_{n\times r}(q)$.  Given a set partition $K_h$ with $\ell$ parts, there are 
$$[n][n-1]\cdots[n-\ell+1]$$
different $n$-restricted $q$-set partitions of $\{1,2,\ldots, r\}$ with $\ast$-height $h$.  Thus,
$$|\mathcal{P}_{n\times r}(q)|=\sum_{\ell = 1}^n S(r,\ell) [n] [n-1] \cdots [n-\ell+1]=d_{n,r}(q),$$
where $d_{n,r}(q)$ is defined in (\ref{PolynomialDefinition}).
By the constructions of this section, we also easily obtain the specializations,
\begin{align*}
|\mathcal{P}_{n\times r}(0)|&=B(r), \qquad \text{for $n\geq r$,}\\
|\mathcal{P}_{n\times r}(1)|&=|\ZZ_n^r(1)|=n^r,
\end{align*}
where $B(r)$ is the $r$th Bell number.

\begin{subsection} {The Chevalley group $\GL_n(\FF_q)$}

The general linear group $G_n=\GL_n(\FF_q)$ has a double coset decomposition given by
\begin{equation}\label{UGBDecomposition}
G_n=\bigsqcup_{w\in S_n} U_B wB_n,
\end{equation}
where $S_n$ is the subgroup of permutation matrices, and 
$$B_n=\left\{\left(\begin{array}{ccc}  \ast & &\ast  \\ & \ddots  &  \\  0 &  & \ast \end{array}\right)\right\}\subseteq G_n\quad\text{and} \quad U_B=\left\{\left(\begin{array}{ccc}  1 & &\ast  \\ & \ddots  &  \\  0 &  & 1 \end{array}\right)\right\}\subseteq B_n$$
are the subgroups of upper-triangular matrices and unipotent upper-triangular matrices, respectively.  For $1\leq i<j\leq n$ and $a\in \FF_q$, let $x_{ij}(a)\in U_B$ be the matrix with $a$ in the $(i,j)$th position, ones on the diagonal, and zeroes everywhere else.   Note that for $i<j$, $k<l$, $a,b\in \FF_q$,
\begin{equation}\label{xxAction}
x_{ij}(a)x_{kl}(b)=\left\{\begin{array}{ll} x_{kl}(b)x_{il}(ab)x_{ij}(a), & \text{if $j=k$,}\\ x_{kl}(b)x_{kj}(-ab)x_{ij}(a), & \text{if $i=l$,}\\ x_{ij}(a+b), & \text{if $i=k$, $j=l$,}\\ x_{kl}(b)x_{ij}(a), & \text{otherwise.}\end{array}\right.
\end{equation}
For $w\in S_n$, we have
\begin{equation} \label{xwAction}
x_{ij}(a) w= w x_{w^{-1}(i)w^{-1}(j)}(a).
\end{equation}

Let 
$$L_n=\left\{\left(\begin{array}{c|ccc} G_{1}  & & 0 & \\  \hline & & &  \\  0 & & G_{n-1} & \\  &  & & \end{array}\right)\right\}\subseteq G_n,\quad U_n=\left\{\left(\begin{array}{c|ccc}  1 & \ast &\cdots & \ast\\ \hline &  & &  \\  0 & & \mathrm{Id}_{n-1} &   \\  &  & & \end{array}\right)\right\}\subseteq G_n,$$
and
$$P_n=L_nU_n=\left\{\left(\begin{array}{c|ccc}  G_1 &\ast & \cdots & \ast\\  \hline& &  & \\  0 & & G_{n-1} & \\ &  & & \end{array}\right)\right\}.$$

For $1\leq k\leq n-1$, let
$$w_k=s_ks_{k-1}\cdots s_1,$$
where $s_i$ is the simple reflection that switches $i$ and $i+1$.  By convention, $w_0=1$.  Note that the $w_k, 0 \le k \le n-1,$ give a set of minimal-length coset representatives for $S_n/(S_1\times S_{n-1})$.  For $1\leq k\leq n-1$ and $a=(a_1,a_2,\ldots,a_k)\in \FF_q^k$, let
$$w_k(a)=s_k(a_k)s_{k-1}(a_{k-1})\cdots s_1(a_1),\qquad\text{where} \qquad s_i(a_i)=x_{i,i+1}(a_i)s_i.$$
Then the decomposition
\begin{equation} \label{GPDecomposition}
G_n=\bigsqcup_{0\leq k\leq n-1\atop a\in \FF_q^k} w_k(a) P_n
\end{equation}
follows from (\ref{UGBDecomposition}) and (\ref{xwAction}).

\end{subsection}

\subsection{Harish-Chandra Restriction and Induction}

To make the notation more manageable, in this section we will assume that $n$ is fixed and drop the subscripts in $G_n$, $P_n$, $U_n$, $L_n$.  Let
$$e_U=\frac{1}{|U|}\sum_{u\in U} u,$$
so that $xe_U=e_U=e_Ux$ for all $x\in U$.
Since $U$ is a normal subgroup in $P$, there is a surjection $P\rightarrow P/U\cong L$, which gives rise to adjoint functors, called inflation and deflation, respectively,
\begin{align*}
&\begin{array}{rccc} \mathrm{Inf}_L^P:&\{\text{Left $L$-modules}\}&\longrightarrow &\{\text{Left $P$-modules}\},\\
 & V & \mapsto & e_UV\end{array}\\
 &\begin{array}{rccc} \mathrm{Def}_L^P:&\{\text{Left $P$-modules}\}&\longrightarrow &\{\text{Left $L$-modules}\}.\\
 & V & \mapsto & e_UV\end{array}
\end{align*}
By composing with induction and restriction, we obtain two functors
\begin{align*}
&\begin{array}{rccccc} \mathrm{Indf}_L^P:&\{\text{Left $L$-modules}\}&\longrightarrow &\{\text{Left $P$-modules}\} & \longrightarrow& \{\text{Left $G$-modules}\},\\
 & V & \mapsto & e_UV & \mapsto & \CC G\otimes_{\CC P} e_U V\end{array}\\
 &\begin{array}{rccccc} \mathrm{Resf}_L^P:&\{\text{Left $G$-modules}\}&\longrightarrow &\{\text{Left $P$-modules}\}& \longrightarrow &\{\text{Left $L$-modules}\}.\\
 & V & \mapsto &V & \mapsto & e_UV\end{array}
\end{align*}
Let $\One$ denote the trivial module of $G$.  Define the $G$-module
\begin{equation}
\IR_q^r=\big(\Indf_L^G\Resf_L^G\big)^r(\One), \qquad\text{for $r\geq 0$,}
\end{equation}
and the $L$-module
\begin{equation}
\IR^{r+\frac{1}{2}}_q=\Resf_L^G\big(\Indf_L^G\Resf_L^G\big)^r(\One), \qquad\text{for $r\geq 0$.}
\end{equation}

\begin{subsection} {A Basis for $\IR_q^r$}

Let 
$$\otimes_U= \otimes_{\CC P} e_U$$
denote tensoring over $\CC P$ and multiplying by $e_U$.
By construction it is clear that
\begin{align*}
\IR_q^r &= \CC\spanning\{g_1\otimes_U g_2\otimes_U \cdots \otimes_U g_r\otimes \One\mid g_1,g_2,\ldots, g_r\in G\}\\
&=\CC\spanning\{w_{k_1}(a^{(1)})\otimes_U\cdots\otimes_Uw_{k_r}(a^{(r)})\otimes \One\mid 0\leq k_1,\ldots, k_r\leq n-1, a^{(m)}\in \FF_q^{k_m}\},\\
\IR^{r+1/2}_q&=\CC\spanning\{e_Ug_1\otimes_U g_2\otimes_U \cdots \otimes_U g_r\otimes \One\mid g_1,g_2,\ldots, g_r\in G\}\\
&=\CC\spanning\{e_Uw_{k_1}(a^{(1)})\otimes_U\cdots\otimes_Uw_{k_r}(a^{(r)})\otimes \One\mid 0\leq k_1,\ldots, k_r\leq n-1, a^{(m)}\in \FF_q^{k_m}\}.
\end{align*}
However, these sets are generally not linearly independent.  The following lemma characterizes when two vectors are equal.

\begin{lemma}\label{VectorIdentification}
Fix $0\leq k_1,k_2,\ldots, k_l\leq n-1$ and $a^{(m)}\in \FF_q^{k_m}$.  Let $1\leq i<j<n$ and $t\in \FF_q^\times$.  Then
\begin{equation}\label{eUEffect}
w_{k_1}(a^{(1)})\otimes_U  \cdots \otimes_U w_{k_l}(a^{(l)}) \otimes_U x_{ij}(t) =w_{k_1}(a^{(1)})\otimes_U  \cdots \otimes_U w_{k_l}(a^{(l)}) \otimes_U 1
\end{equation}
if and only if $i=1$ or
there exists $1<m\leq l$ such that
$$
w_{k_m} w_{k_{m+1}}\cdots w_{k_{l}} \quad \text{ sends\ \  $i$ \ to \  1}.
$$
\end{lemma}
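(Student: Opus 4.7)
The plan is to track $x_{ij}(t)$ as it slides leftward through the tensor, using the three identities $e_U u = e_U$ for $u \in U$, $e_U \ell = \ell e_U$ for $\ell \in L$ (since $L$ normalizes $U$), and the tensor relation $g \otimes_{\CC P} p v = gp \otimes_{\CC P} v$ for $p \in \CC P$, together with the conjugation $w x_{ij}(t) = x_{w(i), w(j)}(t) w$ from \eqref{xwAction}.

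First I handle $i = 1$: since $x_{1j}(t) \in U$, the identity $e_U x_{1j}(t) = e_U$ collapses $\otimes_U x_{1j}(t)$ to $\otimes_U 1$ in the last slot, giving the equality at once. For $i > 1$, the element $x_{ij}(t) \in L$ normalizes $U$, so I can slide it one slot to the left: $e_U x_{ij}(t) = x_{ij}(t) e_U$, then push $x_{ij}(t) \in P$ across $\otimes_{\CC P}$ into the $l$-th slot, and apply the decomposition $w_{k_l}(a^{(l)}) = U_{a^{(l)}} w_{k_l}$ (obtained by shuffling the reflections of $s_i(a_i) = x_{i,i+1}(a_i) s_i$) together with $w_{k_l} x_{ij}(t) = x_{w_{k_l}(i), w_{k_l}(j)}(t) w_{k_l}$. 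The factor $x_{ij}(t)$ is thereby transmuted into $y^{(l)} := x_{w_{k_l}(i), w_{k_l}(j)}(t)$ sitting just to the left of $w_{k_l}$; the auxiliary commutators picked up when passing through $U_{a^{(l)}}$ via \eqref{xxAction} all lie in $U$ and are swallowed by the adjacent $e_U$.

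Iterating, at step $p$ the surviving $x$-element becomes $y^{(p)} := x_{w_{k_p}\cdots w_{k_l}(i),\, w_{k_p}\cdots w_{k_l}(j)}(t)$. As long as its first index exceeds $1$, $y^{(p)} \in L$ and the sliding can continue; the moment its first index equals $1$, $y^{(p)} \in U$ and is absorbed by the neighboring $e_U$, collapsing the expression to the RHS. This absorption is possible only for $1 < m \le l$, since the leftmost slot has no $e_U$ to its left, which gives exactly the $(\Leftarrow)$ direction.

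For the $(\Rightarrow)$ direction I argue contrapositively: if $i > 1$ and no composition $w_{k_m}\cdots w_{k_l}$ sends $i$ to $1$ for $m \ge 2$, the sliding terminates with a nontrivial $L$-element surviving against the leftmost slot, which shifts the coset representative in the first $\CC G$-factor under the decomposition $\CC G = \bigoplus_{(k,a)} w_k(a)\CC P$ of \eqref{GPDecomposition}. Projecting both vectors onto this direct sum, the leading $(k_1, a^{(1)})$-components differ, witnessing $V \ne V'$. The main obstacle is precisely the bookkeeping in this last step: one must verify carefully that every auxiliary commutator produced by \eqref{xxAction} while pushing $y^{(p)}$ past $U_{a^{(p-1)}}$ is a genuine element of $U$ (so it is harmlessly absorbed), leaving the single ``essential'' $L$-factor whose nontriviality is detectable in the $G/P$-coset decomposition.
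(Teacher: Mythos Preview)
Your overall strategy---slide $x_{ij}(t)$ leftward through the tensor, tracking how its row index is transformed by successive $w_{k_p}$---is exactly the paper's approach. But there is a real error in the bookkeeping. You write that after conjugating past $w_{k_l}$ to obtain $y^{(l)}=x_{w_{k_l}(i),w_{k_l}(j)}(t)$, ``the auxiliary commutators picked up when passing through $U_{a^{(l)}}$ via \eqref{xxAction} all lie in $U$.'' This is false. Write $w_{k}(a)=x_{k,k+1}(a_k)\cdots x_{1,k+1}(a_1)\,w_k$ and take the case $1<i<j\le k+1$. Then $y^{(l)}=x_{i-1,\,j-1}(t)$, and commuting it past $x_{j-1,\,k+1}(a_{j-1})$ via the second line of \eqref{xxAction} produces an extra factor $x_{i-1,\,k+1}(-a_{j-1}t)$. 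Its first index is $i-1=w_{k_l}(i)$, which is $>1$ in the situation where you want to keep sliding, so it lies in $L$, not in $U$, and is \emph{not} absorbed. (This is precisely the extra term in the third line of the paper's formula \eqref{waxRelation}.) Consequently your single surviving element $y^{(p)}=x_{w_{k_p}\cdots w_{k_l}(i),\,w_{k_p}\cdots w_{k_l}(j)}(t)$ is an oversimplification: at each stage you carry a \emph{product} of commuting root elements, all sharing the common first index $w_{k_p}\cdots w_{k_l}(i)$ but with various second indices.

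The fix is straightforward and keeps your argument intact: track the common first index rather than a single element. Since every commutator produced by \eqref{xxAction} inherits the same first index $w_{k_l}(i)$ as $y^{(l)}$, the whole packet is in $U$ exactly when that index equals $1$, and otherwise it is in $L$ and can be pushed another step. This repairs the $(\Leftarrow)$ direction completely. For $(\Rightarrow)$, your contrapositive sketch (``a nontrivial $L$-element shifts the coset representative in the first factor'') is vaguer than the paper's argument, which instead observes that each single-slot move in \eqref{waxRelation} is reversible, so equality of the two tensors forces the packet to land in $U$ at some intermediate step; by \eqref{waxRelation} that happens only when $w_{k_m}\cdots w_{k_l}(i)=1$. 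You should adopt that reversibility viewpoint rather than appeal to $G/P$-cosets, since with a product of several $x_{i',\ast}$'s in hand it is not immediate that left-multiplication genuinely changes the $P$-coset of $w_{k_1}(a^{(1)})$.
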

\begin{proof} Note that for $i=1$, $e_Ux_{1j}(t)=e_U$ and for $i>1$, $e_U x_{ij}(t)=x_{ij}(t) e_U$ (since $U$ is normal in $U_B$).  It therefore follows from (\ref{xxAction}) and (\ref{xwAction}) that for $0\leq k\leq n-1$ and $a\in \FF_q^k$,
\begin{equation}\label{waxRelation}
w_k(a)\otimes_U x_{ij}(t)=\left\{\begin{array}{ll} 
x_{w_k(i)j}(t)w_k(a)\otimes_U 1, & \text{if $k+1<i<j$,}\\
x_{w_k(i),j}(t)w_k(a)\otimes_U 1, & \text{if $1<i\leq k+1<j$,}\\
x_{w_k(i),j-1}(t)x_{w_k(i),k+1}(-a_{j-1}t)w_k(a)\otimes_U 1, & \text{if $1<i<j\leq k+1$,}\\
w_k(a)\otimes_U 1, & \text{if $i=1$.}\end{array}\right.
\end{equation}
If $i=1$, then (\ref{eUEffect}) follows.  If $i>1$ and there exists $1<m<l$ such that
$w_{k_m} w_{k_{m+1}}\cdots w_{k_{l}}$ sends $i$ to $1$, then using (\ref{waxRelation}) we can push $x_{ij}(t)$ left past the $w_{k}(a)$ so that
$$e_Uw_{k_m}(a^{(m)})\otimes_U\cdots \otimes_U w_{k_l}(a^{(l)})\otimes_U x_{ij}(t) =
e_Uuw_{k_m}(a^{(m)})\otimes_U\cdots   \otimes_U w_{k_l}(a^{(l)})\otimes_U 1,$$
where  $u\in U$ (since $w_{k_m}\cdots w_{k_l}(i)=1$).  But $e_Uu=e_U$, giving (\ref{eUEffect}).

Conversely, note that $w_k(a)\otimes_U x_{ij}(t)=w_k(a)\otimes_U 1$ if and only if $x_{ij}(t)\in U$.  Thus, if (\ref{eUEffect})
is true, then as we push $x_{ij}(t)$ to the left, at some point we must have 
\begin{align*}
w_{k_1}(a^{(1)})\otimes_U & \cdots \otimes_U w_{k_l}(a^{(l)}) \otimes_U x_{ij}(t)\\
 &= w_{k_1}(a^{(1)})\otimes_U  \cdots \otimes_Uw_{k_{m-1}}(a^{(m-1)})\otimes_Uu w_{k_m}(a^{(m)}\otimes_U\cdots \otimes_U w_{k_l}(a^{(l)}) \otimes_U 1.
 \end{align*}
for some $u\in U$.  By (\ref{waxRelation}) this can only happen if $w_{k_m} w_{k_{m+1}}\cdots w_{k_{l}}(i)=1$ for some $1<m\leq l$.
\end{proof}

Combinatorially, we associate a column of labeled boxes to $w_k(a)$, 
\begin{equation}\label{ColumnToVector}
\xy<0cm,2cm>\xymatrix@R=.5cm@C=.5cm{*={} \ar @{|-|} @<-.15cm> [dddddd]_{n-1}& *={} \ar @{-} [l]  \\
*={} &  *={} \\
*={}  & *={} \ar @{-} [l] \\
*={}  & *={}\ar @{-} [l] \ar @{} [ul]|{a_k} \\
*={}  & *={}  \ar @{-} [l] \save[]+<-.3cm,.35cm> *{\scriptscriptstyle\vdots}\restore\\
*={} & *={} \ar @{-} [l] \ar @{} [ul]|{a_2} \\
*={}  \ar @{-} [uuuuuu]   & *={}\ar @{-}[l] \ar @{-} [uuuuuu] \ar @{} [ul]|{a_1}}\endxy\quad \longleftrightarrow\quad w_k(a)=s_{k}(a_{k})\cdots s_2(a_2)s_1(a_1).
\end{equation}
We obtain vectors in $\IR_q^r$ by labeling $r$ stacks of boxes.  For example,
$$\xy<0cm,2cm>\xymatrix@R=.5cm@C=.5cm{
*={} & *={} & *={} & *={} & *={} & *={} & *={} & *={} \ar @{-}[lllllll]\\
*={} & *={} & *={} & *={} & *={} & *={} & *={} \ar @{-} [l] & *={}\\
*={} & *={} & *={} & *={} & *={} & *={} & *={} \ar @{-} [l] \ar @{} [ul]|{e_6} & *={}\\
*={} & *={} & *={} \ar @{-} [l] & *={} & *={} & *={} & *={} \ar @{-} [l] \ar @{} [ul]|{e_5} & *={}\\
*={} & *={} & *={} \ar @{-} [l] \ar @{} [ul]|{b_4} & *={} & *={}  & *={} & *={} \ar @{-} [l] \ar @{} [ul]|{e_4} & *={}\ar @{-} [l] \\
*={} & *={} & *={} \ar @{-} [l] \ar @{} [ul]|{b_3} & *={} & *={} \ar @{-} [l]  & *={} \ar @{-} [l] & *={} \ar @{-} [l] \ar @{} [ul]|{e_3} & *={} \ar @{-} [l]\ar @{} [ul]|{f_3} \\
*={} & *={} \ar @{-} [l] & *={} \ar @{-} [l] \ar @{} [ul]|{b_2} & *={}  & *={} \ar @{-} [l] \ar @{} [ul]|{c_2} & *={} \ar @{-} [l] \ar @{} [ul]|{d_2} & *={} \ar @{-} [l] \ar @{} [ul]|{e_2} & *={}\ar @{-} [l] \ar @{} [ul]|{f_2}\\
*={}  \ar @{-} [uuuuuuu]  & *={} \ar @{-} [uuuu]  \ar @{} [ul]|{a_1}& *={} \ar @{-} [uuuu]  \ar @{} [ul]|{b_1} & *={}  \ar @{-} [uu] & *={} \ar@{-} [uu] \ar @{} [ul]|{c_1} & *={} \ar @{-} [uuuuuu] \ar @{} [ul]|{d_1} & *={} \ar @{-} [uuuuuu] \ar @{} [ul]|{e_1}  & *={}\ar @{-}[lllllll] \ar @{-} [uuuuuuu] \ar @{} [ul]|{f_1}}\endxy
\ \longleftrightarrow\ w_1(a)\otimes_U w_4(b)\otimes_U w_0\otimes_U w_2(c)\otimes_U w_2(d) \otimes_U w_6(e)\otimes_U w_3(f)\otimes\One.$$
Lemma \ref{VectorIdentification} implies that not all choices of the vectors $a,b,c,d,e,f$ will give different basis vectors of $W_q^7$.  In our example, any change to the $\ast$-ed values in
$$
\xy<0cm,2cm>\xymatrix@R=.5cm@C=.5cm{
*={} & *={} & *={} & *={} & *={} & *={} & *={} & *={} \ar @{-}[lllllll]\\
*={} & *={} & *={} & *={} & *={} & *={} & *={} \ar @{-} [l] & *={}\\
*={} & *={} & *={} & *={} & *={} & *={} & *={} \ar @{-} [l] \ar @{} [ul]|{e_6} & *={}\\
*={} & *={} & *={} \ar @{-} [l] & *={} & *={} & *={} & *={} \ar @{-} [l] \ar @{} [ul]|{e_5} & *={}\\
*={} & *={} & *={} \ar @{-} [l] \ar @{} [ul]|{b_4} & *={} & *={}  & *={} & *={} \ar @{-} [l] \ar @{} [ul]|{e_4} & *={}\ar @{-} [l] \\
*={} & *={} & *={} \ar @{-} [l] \ar @{} [ul]|{b_3} & *={} & *={} \ar @{-} [l]  & *={} \ar @{-} [l] & *={} \ar @{-} [l] \ar @{} [ul]|{\scriptstyle e_3^*} & *={} \ar @{-} [l]\ar @{} [ul]|{\scriptstyle f_3^*} \\
*={} & *={} \ar @{-} [l] & *={} \ar @{-} [l] \ar @{} [ul]|{b_2} & *={}  & *={} \ar @{-} [l] \ar @{} [ul]|{\scriptstyle c_2^*} & *={} \ar @{-} [l] \ar @{} [ul]|{\scriptstyle d_2^*} & *={} \ar @{-} [l] \ar @{} [ul]|{\scriptstyle e_2^*} & *={}\ar @{-} [l] \ar @{} [ul]|{\scriptstyle f_2^*}\\
*={}  \ar @{-} [uuuuuuu]  & *={} \ar @{-} [uuuu]  \ar @{} [ul]|{a_1}& *={} \ar @{-} [uuuu]  \ar @{} [ul]|{\scriptstyle b_1^*} & *={}  \ar @{-} [uu] & *={} \ar@{-} [uu] \ar @{} [ul]|{\scriptstyle c_1^*} & *={} \ar @{-} [uuuuuu] \ar @{} [ul]|{\scriptstyle d_1^*} & *={} \ar @{-} [uuuuuu] \ar @{} [ul]|{\scriptstyle e_1^*}  & *={}\ar @{-}[lllllll] \ar @{-} [uuuuuuu] \ar @{} [ul]|{\scriptstyle f_1^*} }\endxy
$$
will not change the vector in $\IR^7$.   That is, given an element of $\ZZ_{n}^r(q)$, Lemma \ref{VectorIdentification} implies that the $\ast$-height determines which entries can have arbitrary values (see Section \ref{SectionqSetPartitions}).  In particular, if the $k_l$th entry
\begin{align*}
w_{k_1}(a^{(1)})\otimes_U &\cdots \otimes_U w_{k_{l-1}}(a^{(l-1)}) \otimes_U w_{k_l}(b) \otimes_U\cdots\\ &=  w_{k_1}(a^{(1)})\otimes_U \cdots \otimes_U w_{k_{l-1}}(a^{(l-1)})\otimes_U  x_{1,k_l+1}(b_1) \cdots x_{k_l,k_l+1}(b_{k_l})w_{k_l} \otimes_U \cdots
\end{align*}
has $\ast$-height $h$ then for each $1\leq i\leq h$, there exists $1<m_i\leq l-1$ such that 
$$w_{k_{m_i}}\cdots w_{k_{l-1}}(i)=1.$$
By Lemma \ref{VectorIdentification} these entries can be replaced by arbitrary entries, and for these entries we average over all possible choices.  Thus, for each element in $\mathcal{P}_{n\times r}(q)$, we obtain a basis vector.  Specifically, for $0\leq k^*\leq k\leq n-1$ and $a=(a_1,\cdots, a_{k-k^*})\in \FF_q^{k-k^*}$, associate
\begin{equation}\label{StarColumnToVector}
\xy<0cm,2cm>\xymatrix@R=.5cm@C=.5cm{*={} \ar @{|-|} @<-.5cm> [dddddddd]_{n-1}& *={} \ar @{-} [l]  \\
*={} & *={} \\
*={}  \ar @{|-|} @<-.15cm> [dddddd]_{k} &  *={}\ar @{-} [l] \\
*={} &  *={} \ar @{-} [l] \save[]+<.7cm,.3cm> *{\scriptstyle a_{k-k^*}} \ar @/^{.2cm}/ []-<.25cm,-.25cm>\restore\\
*={} & *={} \ar @{-} [l] \save[]+<-.3cm,.35cm> *{\scriptscriptstyle\vdots}\restore\\
*={}  & *={}\ar @{-} [l] \ar @{} [ul]|{a_1}  \ar @{|-|} @<.15cm> [ddd]^{k^*} \\
*={}  & *={} \ar @{-} [l]\ar @{} [ul]|{\ast} \\
*={} & *={} \ar @{-} [l]  \save[]+<-.3cm,.35cm> *{\scriptscriptstyle\vdots}\restore\\
*={}  \ar @{-} [uuuuuuuu]   & *={}\ar @{-}[l] \ar @{-} [uuuuuuuu] \ar @{} [ul]|{\ast}}\endxy \longleftrightarrow\quad \overset{\ast}{w}_k(a)=s_{k}(a_{k-k*})\cdots s_{k^*+1}(a_1)\frac{1}{q^{k^*}}\sum_{b\in \FF_q^{k^*}} s_{k^*}(b_{k^*})\cdots s_1(b_1).
\end{equation}
For $K=((k_1,a^{(1)}),\ldots,(k_r,a^{(r)}))\in \mathcal{P}_{n\times r}(q)$, let
$$v_K=\overset{\ast}{w}_{k_1}(a^{(1)})\otimes_U \overset{\ast}{w}_{k_2}(a^{(2)})\otimes_U\cdots\otimes_U \overset{\ast}{w}_{k_r}(a^{(r)})\in \CC G\otimes_U \CC G\otimes_U\cdots \otimes_U \CC G\otimes_U\One_n.$$
For example,
\begin{align*} v_{\xymatrix@R=.25cm@C=.25cm{
*={} & *={} & *={} & *={} & *={} & *={} & *={} & *={} \ar @{-}[lllllll]\\
*={} & *={} & *={} & *={} & *={} & *={} & *={} \ar @{-} [l] & *={}\\
*={} & *={} & *={} & *={} & *={} & *={} & *={} \ar @{-} [l] \ar @{} [ul]|{\scriptscriptstyle g} & *={}\\
*={} & *={} & *={} \ar @{-} [l] & *={} & *={} & *={} & *={} \ar @{-} [l] \ar @{} [ul]|{\scriptscriptstyle f} & *={}\\
*={} & *={} & *={} \ar @{-} [l] \ar @{} [ul]|{\scriptscriptstyle d} & *={} & *={}  & *={} & *={} \ar @{-} [l] \ar @{} [ul]|{\scriptscriptstyle e} & *={}\ar @{-} [l] \\
*={} & *={} & *={} \ar @{-} [l] \ar @{} [ul]|{\scriptscriptstyle c} & *={} & *={} \ar @{-} [l]  & *={} \ar @{-} [l] & *={} \ar @{-} [l] \ar @{} [ul]|{\ast} & *={} \ar @{-} [l]\ar @{} [ul]|{\ast} \\
*={} & *={} \ar @{-} [l] & *={} \ar @{-} [l] \ar @{} [ul]|{\scriptscriptstyle b} & *={}  & *={} \ar @{-} [l] \ar @{} [ul]|{\ast} & *={} \ar @{-} [l] \ar @{} [ul]|{\ast} & *={} \ar @{-} [l] \ar @{} [ul]|{\ast} & *={}\ar @{-} [l] \ar @{} [ul]|{\ast}\\
*={}  \ar @{-} [uuuuuuu]  & *={} \ar @{-} [uuuu]  \ar @{-} [ul]|{\scriptscriptstyle{a}}& *={} \ar @{-} [uuuu]  \ar @{} [ul]|{\ast} & *={}  \ar @{-} [uu] & *={} \ar@{-} [uu] \ar @{} [ul]|{\ast} & *={} \ar @{-} [uuuuuu] \ar @{} [ul]|{\ast} & *={} \ar @{-} [uuuuuu] \ar @{} [ul]|{\ast}  & *={}\ar @{-}[lllllll] \ar @{-} [uuuuuuu] \ar @{} [ul]|{\ast}}}
=&x_{12}(a)w_1\otimes x_{45}(d)x_{35}(c)x_{25}(b)\frac{1}{q}\sum_{t\in \FF_q}x_{15}(t)w_4\otimes 1
 \otimes\frac{1}{q^2} \sum_{s,t\in \FF_q} x_{23}(s)x_{13}(t)w_2\\
&\otimes \frac{1}{q^2}\sum_{s,t\in\FF_q^\times} x_{23}(s)x_{13}(t)w_2
\otimes x_{67}(g)x_{57}(f)x_{47}(e)\frac{1}{q^3}\hspace{-.15cm} \sum_{r,s,t\in \FF_q} \hspace{-.15cm} x_{37}(r)x_{27}(s)x_{17}(t) w_6\\
&\otimes\frac{1}{q^3} \sum_{r,s,t\in \FF_q} x_{34}(r)x_{24}(s)x_{14}(t)w_3\otimes \One_n.\end{align*}

Lemma \ref{VectorIdentification} and the following discussion imply that the $v_K$ are linearly independent, so we have proved the first part of the following theorem.  

\begin{thm} \label{BasisTheorem} Let $r\in \ZZ_{\geq 0}$.  Then
\begin{enumerate}
\item[(a)]  The $G$-module $\IR_q^r$ has a basis given by
$$\{ v_K\mid K\in\mathcal{P}_{n\times r}(q)\},$$
and thus $\dim(\IR_q^r)=d_{n,r}(q)$.
\item[(b)]  The $L$-module $W_q^{r+1/2}$ has a basis given by
$$\{ v_K\mid K\in \mathcal{P}_{n\times r+1}(q)\text{ with $k_1=0$}\},$$
and thus $\dim(\IR_q^r)=d_{n,r+1}(q)/[n]$.
\end{enumerate}
\end{thm}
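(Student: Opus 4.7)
The plan is to establish spanning first and then pin down linear independence via a dimension count. For part (a), by the decomposition (\ref{GPDecomposition}), the module $\IR_q^r$ is spanned by all tensors $w_{k_1}(a^{(1)}) \otimes_U \cdots \otimes_U w_{k_r}(a^{(r)}) \otimes \One$ with $(k_1,\ldots,k_r) \in \ZZ_n^r$ and $a^{(j)} \in \FF_q^{k_j}$. I would iterate Lemma \ref{VectorIdentification} reading right-to-left: whenever the $\ast$-height satisfies $k_j^* \ge i$, the defining recursion $k_j^* = \min\bigl(k_j, \max\{k_1^*+1,\ldots,k_{j-1}^*+1\}\bigr)$ provides some $m < j$ with $w_{k_m}\cdots w_{k_{j-1}}(i)=1$, so by Lemma \ref{VectorIdentification} the entry $a^{(j)}_i$ can be freely modified without altering the tensor. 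Averaging over these starred coordinates collapses the spanning set to $\{v_K : K \in \mathcal{P}_{n\times r}(q)\}$ in the normal form (\ref{StarColumnToVector}). The unnamed corollary at the end of Section 2 gives $|\mathcal{P}_{n\times r}(q)| = d_{n,r}(q)$, while the bimodule decomposition (\ref{intro:decomp}) combined with the $q$-hook formula $\dim G_n^\lambda = f_n^\lambda(q)$ yields $\dim \IR_q^r = \sum_{\lambda\in\Lambda_n^r} f_n^\lambda(q) m_r^\lambda = d_{n,r}(q)$. A spanning set of size equal to the dimension is automatically a basis.

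For part (b), the module $\IR_q^{r+1/2} = e_U \IR_q^r$ is spanned by $e_U$ applied to the tensors above. The leading $e_U$ averages over all $x_{1,j}(t) \in U$, which by the $i=1$ case of Lemma \ref{VectorIdentification} extends the starred regime one step further to the right. I would match each such spanning tensor with the element $v_{K'}$ for $K' \in \mathcal{P}_{n\times(r+1)}(q)$ obtained by prepending an empty column $k'_1 = 0$ and recomputing the $\ast$-heights of the remaining columns accordingly. A dimension count closes the argument: Harish-Chandra induction multiplies dimensions by $[G:P] = \sum_{k=0}^{n-1} q^k = [n]$, so $\dim \IR_q^{r+1/2} = \dim \IR_q^{r+1}/[n] = d_{n,r+1}(q)/[n]$, which matches the single-column generating-series count of $\{K' \in \mathcal{P}_{n\times(r+1)}(q) : k'_1 = 0\}$ carried out in Section 3.1.

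The main obstacle is the clean bookkeeping needed when iterating Lemma \ref{VectorIdentification} to collapse the spanning set: one must verify that the $\ast$-height algorithm captures exactly the relations forced by the lemma and no extras, so that the collapsed spanning set has cardinality precisely $|\mathcal{P}_{n\times r}(q)|$. The dimension count lets me bypass any explicit non-equality check, since any further identification among the $v_K$ would drop the spanning set strictly below $\dim \IR_q^r$, a contradiction. The same subtlety, together with the added task of tracking how $e_U$-averaging on the left propagates into starred positions of the later columns, is the only nontrivial point in part (b).
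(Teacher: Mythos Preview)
Your approach is correct but takes a different route from the paper. The paper argues linear independence \emph{directly} from Lemma~\ref{VectorIdentification}: the ``only if'' direction of that lemma says that the relations among the raw spanning vectors $w_{k_1}(a^{(1)})\otimes_U\cdots\otimes_U w_{k_r}(a^{(r)})\otimes\One$ are \emph{exactly} those encoded by the $\ast$-height, so the normalized vectors $v_K$ are pairwise inequivalent and hence a basis. In particular, the paper treats Theorem~\ref{BasisTheorem} as an \emph{independent} derivation of $\dim(\IR_q^r)=d_{n,r}(q)$, as advertised at the opening of Section~3 (``we count the elements of the basis to give another proof\ldots''). Your argument instead uses only the ``if'' direction of Lemma~\ref{VectorIdentification} to collapse the spanning set, and then invokes the dimension formula already proved in Section~2 (via the bimodule decomposition~\eqref{intro:decomp} and Corollary~\ref{corsum}/Proposition~\ref{propsum}) to force linear independence. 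This is logically tighter---you never need to check that the $\ast$-height captures \emph{all} relations---but it forfeits the independent confirmation of the dimension that the paper is after. For part~(b) the same contrast holds: the paper proves the auxiliary lemma $e_Uv_K=v_{\tilde K}$ and again appeals to Lemma~\ref{VectorIdentification} for independence, whereas you use the index-$[n]$ relation $\dim\IR_q^{r+1}=[n]\dim\IR_q^{r+1/2}$. Your count $|\{K'\in\mathcal{P}_{n\times(r+1)}(q):k'_1=0\}|=d_{n,r+1}(q)/[n]$ is correct (the later $\ast$-heights depend only on $k_1^*=0$, not on $k_1$ itself, so the first column contributes an independent factor $[n]$), though this is not stated explicitly in Section~3.1 and deserves one sentence of justification.
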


To prove Theorem \ref{BasisTheorem} (b), it suffices to characterize what happens in $e_U\IR_q^r$.  Let
$$\begin{array}{ccc}\mathcal{P}_{n\times r}(q) & \longrightarrow & \{K\in \mathcal{P}_{n\times r+1}(q)\mid k_1=0\}\\
K & \mapsto & \tilde{K}\end{array}$$
be the surjective function given by the following algorithm.
\begin{enumerate}
\item[(1)] Add an empty column to the left side of $K$ and set $m=1$,
\item[(2)] If the resulting diagram is in $\mathcal{P}_{n\times r+1}(q)$, stop.  Else set $m:=m+1$.
\item[(3)] If column $m$ has an unstarred box, then replace the bottom unstarred entry by $\ast$.  Go to step (2).
\end{enumerate}
For example, we get
$$\xy<0cm,2cm>\xymatrix@R=.5cm@C=.5cm{
*={} & *={} & *={} & *={} & *={} & *={} & *={} & *={} \ar @{-}[lllllll]\\
*={} &*={} & *={} & *={} & *={} & *={} & *={} \ar @{-} [l] & *={}\\
*={} & *={} & *={} & *={} & *={} & *={} & *={} \ar @{-} [l] \ar @{} [ul]|{e_3} & *={}\\
*={} & *={} & *={} \ar @{-} [l] & *={} & *={} & *={} & *={} \ar @{-} [l] \ar @{} [ul]|{e_2} & *={}\\
*={} & *={}  & *={} \ar @{-} [l] \ar @{} [ul]|{b_3} & *={} & *={} \ar @{-} [l] & *={} & *={} \ar @{-} [l] \ar @{} [ul]|{e_1} & *={}\ar @{-} [l] \\
*={} & *={} \ar @{-} [l]  & *={} \ar @{-} [l] \ar @{} [ul]|{b_2} & *={} & *={} \ar @{-} [l] \ar @{} [ul]|{c_1} & *={} \ar @{-} [l] & *={} \ar @{-} [l] \ar @{} [ul]|{\ast} & *={} \ar @{-} [l]\ar @{} [ul]|{\ast} \\
*={} & *={} \ar @{-} [l]  \ar @{} [ul]|{a_2} & *={} \ar @{-} [l] \ar @{} [ul]|{b_1} & *={}  & *={} \ar @{-} [l] \ar @{} [ul]|{\ast} & *={} \ar @{-} [l] \ar @{} [ul]|{\ast} & *={} \ar @{-} [l] \ar @{} [ul]|{\ast} & *={}\ar @{-} [l] \ar @{} [ul]|{\ast}\\
*={} \ar @{-} [uuuuuuu]  & *={} \ar @{-} [uuuu]  \ar @{} [ul]|{a_1} & *={} \ar @{-} [uuuu]  \ar @{} [ul]|{\ast} & *={}  \ar @{-} [uuu] & *={} \ar@{-} [uuu] \ar @{} [ul]|{\ast} & *={} \ar @{-} [uuuuuu] \ar @{} [ul]|{\ast} & *={} \ar @{-} [uuuuuu] \ar @{} [ul]|{\ast}  & *={}\ar @{-}[lllllll] \ar @{-} [uuuuuuu] \ar @{} [ul]|{\ast}}\endxy
\overset{m=1}{\rightarrow}
\xy<0cm,2cm>\xymatrix@R=.5cm@C=.5cm{
*={} & *={} & *={} & *={} & *={} & *={} & *={} & *={} & *={} \ar @{-}[llllllll]\\
*={} & *={} &*={} & *={} & *={} & *={} & *={} & *={} \ar @{-} [l] & *={}\\
*={} & *={} & *={} & *={} & *={} & *={} & *={} & *={} \ar @{-} [l] \ar @{} [ul]|{e_3} & *={}\\
*={} & *={} & *={} & *={} \ar @{-} [l] & *={} & *={} & *={} & *={} \ar @{-} [l] \ar @{} [ul]|{e_2} & *={}\\
*={} & *={} & *={}  & *={} \ar @{-} [l] \ar @{} [ul]|{b_3} & *={} & *={} \ar @{-} [l] & *={} & *={} \ar @{-} [l] \ar @{} [ul]|{e_1} & *={}\ar @{-} [l] \\
*={} & *={} & *={} \ar @{-} [l]  & *={} \ar @{-} [l] \ar @{} [ul]|{b_2} & *={} & *={} \ar @{-} [l] \ar @{} [ul]|{c_1} & *={} \ar @{-} [l] & *={} \ar @{-} [l] \ar @{} [ul]|{\ast} & *={} \ar @{-} [l]\ar @{} [ul]|{\ast} \\
*={} & *={} & *={} \ar @{-} [l]  \ar @{} [ul]|{a_2} & *={} \ar @{-} [l] \ar @{} [ul]|{b_1} & *={}  & *={} \ar @{-} [l] \ar @{} [ul]|{\ast} & *={} \ar @{-} [l] \ar @{} [ul]|{\ast} & *={} \ar @{-} [l] \ar @{} [ul]|{\ast} & *={}\ar @{-} [l] \ar @{} [ul]|{\ast}\\
*={} \ar @{-} [uuuuuuu] & *={} \ar @{-} [uu]   \ar @{} [l]^{\uparrow} & *={} \ar @{-} [uuuu]  \ar @{} [ul]|{a_1} & *={} \ar @{-} [uuuu]  \ar @{} [ul]|{\ast} & *={}  \ar @{-} [uuu] & *={} \ar@{-} [uuu] \ar @{} [ul]|{\ast} & *={} \ar @{-} [uuuuuu] \ar @{} [ul]|{\ast} & *={} \ar @{-} [uuuuuu] \ar @{} [ul]|{\ast}  & *={}\ar @{-}[llllllll] \ar @{-} [uuuuuuu] \ar @{} [ul]|{\ast}}\endxy
\overset{m=2}{\rightarrow}
\xy<0cm,2cm>\xymatrix@R=.5cm@C=.5cm{
*={} & *={} & *={} & *={} & *={} & *={} & *={} & *={} & *={} \ar @{-}[llllllll]\\
*={} & *={} &*={} & *={} & *={} & *={} & *={} & *={} \ar @{-} [l] & *={}\\
*={} & *={} & *={} & *={} & *={} & *={} & *={} & *={} \ar @{-} [l] \ar @{} [ul]|{e_3} & *={}\\
*={} & *={} & *={} & *={} \ar @{-} [l] & *={} & *={} & *={} & *={} \ar @{-} [l] \ar @{} [ul]|{e_2} & *={}\\
*={} & *={} & *={}  & *={} \ar @{-} [l] \ar @{} [ul]|{b_3} & *={} & *={} \ar @{-} [l] & *={} & *={} \ar @{-} [l] \ar @{} [ul]|{e_1} & *={}\ar @{-} [l] \\
*={} & *={} & *={} \ar @{-} [l]  & *={} \ar @{-} [l] \ar @{} [ul]|{b_2} & *={} & *={} \ar @{-} [l] \ar @{} [ul]|{c_1} & *={} \ar @{-} [l] & *={} \ar @{-} [l] \ar @{} [ul]|{\ast} & *={} \ar @{-} [l]\ar @{} [ul]|{\ast} \\
*={} & *={} & *={} \ar @{-} [l]  \ar @{} [ul]|{a_2} & *={} \ar @{-} [l] \ar @{} [ul]|{b_1} & *={}  & *={} \ar @{-} [l] \ar @{} [ul]|{\ast} & *={} \ar @{-} [l] \ar @{} [ul]|{\ast} & *={} \ar @{-} [l] \ar @{} [ul]|{\ast} & *={}\ar @{-} [l] \ar @{} [ul]|{\ast}\\
*={} \ar @{-} [uuuuuuu] & *={} \ar @{-} [uu]  & *={} \ar @{-} [uuuu]  \ar @{} [ul]|{\ast} \ar @{} [l]^{\uparrow} & *={} \ar @{-} [uuuu]  \ar @{} [ul]|{\ast}  & *={}  \ar @{-} [uuu] & *={} \ar@{-} [uuu] \ar @{} [ul]|{\ast} & *={} \ar @{-} [uuuuuu] \ar @{} [ul]|{\ast} & *={} \ar @{-} [uuuuuu] \ar @{} [ul]|{\ast}  & *={}\ar @{-}[llllllll] \ar @{-} [uuuuuuu] \ar @{} [ul]|{\ast}}\endxy
$$
$$\overset{m=3}{\rightarrow}
\xy<0cm,2cm>\xymatrix@R=.5cm@C=.5cm{
*={} & *={} & *={} & *={} & *={} & *={} & *={} & *={} & *={} \ar @{-}[llllllll]\\
*={} & *={} &*={} & *={} & *={} & *={} & *={} & *={} \ar @{-} [l] & *={}\\
*={} & *={} & *={} & *={} & *={} & *={} & *={} & *={} \ar @{-} [l] \ar @{} [ul]|{e_3} & *={}\\
*={} & *={} & *={} & *={} \ar @{-} [l] & *={} & *={} & *={} & *={} \ar @{-} [l] \ar @{} [ul]|{e_2} & *={}\\
*={} & *={} & *={}  & *={} \ar @{-} [l] \ar @{} [ul]|{b_3} & *={} & *={} \ar @{-} [l] & *={} & *={} \ar @{-} [l] \ar @{} [ul]|{e_1} & *={}\ar @{-} [l] \\
*={} & *={} & *={} \ar @{-} [l]  & *={} \ar @{-} [l] \ar @{} [ul]|{b_2} & *={} & *={} \ar @{-} [l] \ar @{} [ul]|{c_1} & *={} \ar @{-} [l] & *={} \ar @{-} [l] \ar @{} [ul]|{\ast} & *={} \ar @{-} [l]\ar @{} [ul]|{\ast} \\
*={} & *={} & *={} \ar @{-} [l]  \ar @{} [ul]|{a_2} & *={} \ar @{-} [l] \ar @{} [ul]|{\ast} & *={}  & *={} \ar @{-} [l] \ar @{} [ul]|{\ast} & *={} \ar @{-} [l] \ar @{} [ul]|{\ast} & *={} \ar @{-} [l] \ar @{} [ul]|{\ast} & *={}\ar @{-} [l] \ar @{} [ul]|{\ast}\\
*={} \ar @{-} [uuuuuuu] & *={} \ar @{-} [uu]  & *={} \ar @{-} [uuuu]  \ar @{} [ul]|{\ast} & *={} \ar @{-} [uuuu]  \ar @{} [ul]|{\ast} \ar @{} [l]^{\uparrow} & *={}  \ar @{-} [uuu]   & *={} \ar@{-} [uuu] \ar @{} [ul]|{\ast} & *={} \ar @{-} [uuuuuu] \ar @{} [ul]|{\ast} & *={} \ar @{-} [uuuuuu] \ar @{} [ul]|{\ast}  & *={}\ar @{-}[llllllll] \ar @{-} [uuuuuuu] \ar @{} [ul]|{\ast}}\endxy
\overset{m=4}{\rightarrow}
\xy<0cm,2cm>\xymatrix@R=.5cm@C=.5cm{
*={} & *={} & *={} & *={} & *={} & *={} & *={} & *={} & *={} \ar @{-}[llllllll]\\
*={} & *={} &*={} & *={} & *={} & *={} & *={} & *={} \ar @{-} [l] & *={}\\
*={} & *={} & *={} & *={} & *={} & *={} & *={} & *={} \ar @{-} [l] \ar @{} [ul]|{e_3} & *={}\\
*={} & *={} & *={} & *={} \ar @{-} [l] & *={} & *={} & *={} & *={} \ar @{-} [l] \ar @{} [ul]|{e_2} & *={}\\
*={} & *={} & *={}  & *={} \ar @{-} [l] \ar @{} [ul]|{b_3} & *={} & *={} \ar @{-} [l] & *={} & *={} \ar @{-} [l] \ar @{} [ul]|{e_1} & *={}\ar @{-} [l] \\
*={} & *={} & *={} \ar @{-} [l]  & *={} \ar @{-} [l] \ar @{} [ul]|{b_2} & *={} & *={} \ar @{-} [l] \ar @{} [ul]|{c_1} & *={} \ar @{-} [l] & *={} \ar @{-} [l] \ar @{} [ul]|{\ast} & *={} \ar @{-} [l]\ar @{} [ul]|{\ast} \\
*={} & *={} & *={} \ar @{-} [l]  \ar @{} [ul]|{a_2} & *={} \ar @{-} [l] \ar @{} [ul]|{\ast} & *={}  & *={} \ar @{-} [l] \ar @{} [ul]|{\ast} & *={} \ar @{-} [l] \ar @{} [ul]|{\ast} & *={} \ar @{-} [l] \ar @{} [ul]|{\ast} & *={}\ar @{-} [l] \ar @{} [ul]|{\ast}\\
*={} \ar @{-} [uuuuuuu] & *={} \ar @{-} [uu]  & *={} \ar @{-} [uuuu]  \ar @{} [ul]|{\ast} & *={} \ar @{-} [uuuu]  \ar @{} [ul]|{\ast} & *={}  \ar @{-} [uuu] \ar @{} [l]^{\uparrow}  & *={} \ar@{-} [uuu] \ar @{} [ul]|{\ast} & *={} \ar @{-} [uuuuuu] \ar @{} [ul]|{\ast} & *={} \ar @{-} [uuuuuu] \ar @{} [ul]|{\ast}  & *={}\ar @{-}[llllllll] \ar @{-} [uuuuuuu] \ar @{} [ul]|{\ast}}\endxy
\overset{m=5}{\rightarrow}
\xy<0cm,2cm>\xymatrix@R=.5cm@C=.5cm{
*={} & *={} & *={} & *={} & *={} & *={} & *={} & *={} & *={} \ar @{-}[llllllll]\\
*={} & *={} &*={} & *={} & *={} & *={} & *={} & *={} \ar @{-} [l] & *={}\\
*={} & *={} & *={} & *={} & *={} & *={} & *={} & *={} \ar @{-} [l] \ar @{} [ul]|{e_3} & *={}\\
*={} & *={} & *={} & *={} \ar @{-} [l] & *={} & *={} & *={} & *={} \ar @{-} [l] \ar @{} [ul]|{e_2} & *={}\\
*={} & *={} & *={}  & *={} \ar @{-} [l] \ar @{} [ul]|{b_3} & *={} & *={} \ar @{-} [l] & *={} & *={} \ar @{-} [l] \ar @{} [ul]|{e_1} & *={}\ar @{-} [l] \\
*={} & *={} & *={} \ar @{-} [l]  & *={} \ar @{-} [l] \ar @{} [ul]|{b_2} & *={} & *={} \ar @{-} [l] \ar @{} [ul]|{\ast} & *={} \ar @{-} [l] & *={} \ar @{-} [l] \ar @{} [ul]|{\ast} & *={} \ar @{-} [l]\ar @{} [ul]|{\ast} \\
*={} & *={} & *={} \ar @{-} [l]  \ar @{} [ul]|{a_2} & *={} \ar @{-} [l] \ar @{} [ul]|{\ast} & *={}  & *={} \ar @{-} [l] \ar @{} [ul]|{\ast} & *={} \ar @{-} [l] \ar @{} [ul]|{\ast} & *={} \ar @{-} [l] \ar @{} [ul]|{\ast} & *={}\ar @{-} [l] \ar @{} [ul]|{\ast}\\
*={} \ar @{-} [uuuuuuu] & *={} \ar @{-} [uu]  & *={} \ar @{-} [uuuu]  \ar @{} [ul]|{\ast} & *={} \ar @{-} [uuuu]  \ar @{} [ul]|{\ast} & *={}  \ar @{-} [uuu] & *={} \ar@{-} [uuu] \ar @{} [ul]|{\ast} \ar @{} [l]^{\uparrow} & *={} \ar @{-} [uuuuuu] \ar @{} [ul]|{\ast}  & *={} \ar @{-} [uuuuuu] \ar @{} [ul]|{\ast}  & *={}\ar @{-}[llllllll] \ar @{-} [uuuuuuu] \ar @{} [ul]|{\ast}}\endxy$$
$$\overset{m=6}{\rightarrow}
\xy<0cm,2cm>\xymatrix@R=.5cm@C=.5cm{
*={} & *={} & *={} & *={} & *={} & *={} & *={} & *={} & *={} \ar @{-}[llllllll]\\
*={} & *={} &*={} & *={} & *={} & *={} & *={} & *={} \ar @{-} [l] & *={}\\
*={} & *={} & *={} & *={} & *={} & *={} & *={} & *={} \ar @{-} [l] \ar @{} [ul]|{e_3} & *={}\\
*={} & *={} & *={} & *={} \ar @{-} [l] & *={} & *={} & *={} & *={} \ar @{-} [l] \ar @{} [ul]|{e_2} & *={}\\
*={} & *={} & *={}  & *={} \ar @{-} [l] \ar @{} [ul]|{b_3} & *={} & *={} \ar @{-} [l] & *={} & *={} \ar @{-} [l] \ar @{} [ul]|{e_1} & *={}\ar @{-} [l] \\
*={} & *={} & *={} \ar @{-} [l]  & *={} \ar @{-} [l] \ar @{} [ul]|{b_2} & *={} & *={} \ar @{-} [l] \ar @{} [ul]|{\ast} & *={} \ar @{-} [l] & *={} \ar @{-} [l] \ar @{} [ul]|{\ast} & *={} \ar @{-} [l]\ar @{} [ul]|{\ast} \\
*={} & *={} & *={} \ar @{-} [l]  \ar @{} [ul]|{a_2} & *={} \ar @{-} [l] \ar @{} [ul]|{\ast} & *={}  & *={} \ar @{-} [l] \ar @{} [ul]|{\ast} & *={} \ar @{-} [l] \ar @{} [ul]|{\ast} & *={} \ar @{-} [l] \ar @{} [ul]|{\ast} & *={}\ar @{-} [l] \ar @{} [ul]|{\ast}\\
*={} \ar @{-} [uuuuuuu] & *={} \ar @{-} [uu]  & *={} \ar @{-} [uuuu]  \ar @{} [ul]|{\ast} & *={} \ar @{-} [uuuu]  \ar @{} [ul]|{\ast} & *={}  \ar @{-} [uuu] & *={} \ar@{-} [uuu] \ar @{} [ul]|{\ast} & *={} \ar @{-} [uuuuuu] \ar @{} [ul]|{\ast}  \ar @{} [l]^{\uparrow}  & *={} \ar @{-} [uuuuuu] \ar @{} [ul]|{\ast} & *={}\ar @{-}[llllllll] \ar @{-} [uuuuuuu] \ar @{} [ul]|{\ast}}\endxy
\overset{m=7}{\rightarrow}
\xy<0cm,2cm>\xymatrix@R=.5cm@C=.5cm{
*={} & *={} & *={} & *={} & *={} & *={} & *={} & *={} & *={} \ar @{-}[llllllll]\\
*={} & *={} &*={} & *={} & *={} & *={} & *={} & *={} \ar @{-} [l] & *={}\\
*={} & *={} & *={} & *={} & *={} & *={} & *={} & *={} \ar @{-} [l] \ar @{} [ul]|{e_3} & *={}\\
*={} & *={} & *={} & *={} \ar @{-} [l] & *={} & *={} & *={} & *={} \ar @{-} [l] \ar @{} [ul]|{e_2} & *={}\\
*={} & *={} & *={}  & *={} \ar @{-} [l] \ar @{} [ul]|{b_3} & *={} & *={} \ar @{-} [l] & *={} & *={} \ar @{-} [l] \ar @{} [ul]|{\ast} & *={}\ar @{-} [l] \\
*={} & *={} & *={} \ar @{-} [l]  & *={} \ar @{-} [l] \ar @{} [ul]|{b_2} & *={} & *={} \ar @{-} [l] \ar @{} [ul]|{\ast} & *={} \ar @{-} [l] & *={} \ar @{-} [l] \ar @{} [ul]|{\ast} & *={} \ar @{-} [l]\ar @{} [ul]|{\ast} \\
*={} & *={} & *={} \ar @{-} [l]  \ar @{} [ul]|{a_2} & *={} \ar @{-} [l] \ar @{} [ul]|{\ast} & *={}  & *={} \ar @{-} [l] \ar @{} [ul]|{\ast} & *={} \ar @{-} [l] \ar @{} [ul]|{\ast} & *={} \ar @{-} [l] \ar @{} [ul]|{\ast} & *={}\ar @{-} [l] \ar @{} [ul]|{\ast}\\
*={} \ar @{-} [uuuuuuu] & *={} \ar @{-} [uu]  & *={} \ar @{-} [uuuu]  \ar @{} [ul]|{\ast} & *={} \ar @{-} [uuuu]  \ar @{} [ul]|{\ast} & *={}  \ar @{-} [uuu] & *={} \ar@{-} [uuu] \ar @{} [ul]|{\ast} & *={} \ar @{-} [uuuuuu] \ar @{} [ul]|{\ast} & *={} \ar @{-} [uuuuuu] \ar @{} [ul]|{\ast}  \ar @{} [l]^{\uparrow}  & *={}\ar @{-}[llllllll] \ar @{-} [uuuuuuu] \ar @{} [ul]|{\ast}}\endxy\overset{\text{Stop}}{\rightarrow}$$
\begin{lemma}
Let $K\in \mathcal{P}_{n\times k}(q)$.  Then $e_Uv_K=v_{\tilde{K}}$.
\end{lemma}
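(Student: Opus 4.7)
I would work in $\IR_q^{r+1}$ via the identification $e_U v_K = 1\otimes_U v_K$ coming from the inclusion $\IR_q^{r+1/2}=e_U\IR_q^r\hookrightarrow \IR_q^{r+1}$, $e_U v\mapsto 1\otimes_U v$, which uses the convention $1\otimes_U y = 1\otimes_{\CC P}e_Uy$. Both sides of the claimed identity then become elements of $\IR_q^{r+1}$ that start with $1=\overset{\ast}{w}_0$ as their first tensor slot, and the task reduces to showing that the additional $\ast$-averaging appearing in $v_{\tilde K}$ compared with $1\otimes_U v_K$ does not change the underlying vector.

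The key computational ingredient is the matrix factorization
\[
w_k(a)=x_{1,k+1}(a_1)\,x_{2,k+1}(a_2)\cdots x_{k,k+1}(a_k)\,w_k,
\]
which places the entries of $a$ in the $(k+1)$-st column of the matrix above the diagonal. Since $x_{1,k+1}(a_1)\in U_n$, we have $e_Ux_{1,k+1}(a_1)=e_U$; combined with $1\otimes_U y=1\otimes_{\CC P}e_Uy$, this yields
\[
1\otimes_U w_k(a_1,a_2,\ldots,a_k)\otimes_U z \;=\; 1\otimes_U w_k(b,a_2,\ldots,a_k)\otimes_U z \qquad\text{for every }b\in\FF_q.
\]
Averaging over $b$ replaces $w_k(a)$ by the $\overset{\ast}{w}_k$ of $\ast$-height one, which is precisely the effect of step $(3)$ of the algorithm applied to column $m=2$ of $\tilde K$.

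To finish, I would iterate this rewriting column by column, mirroring the algorithm. After each application the freshly introduced averaging operator $\tfrac{1}{q}\sum_b x_{1,k_m+1}(b)$ must be conjugated past the now-modified $\overset{\ast}{w}_{k_m}(a^{(m)})$ and positioned to act immediately to the left of the next factor; this is exactly what the push-through relation \eqref{waxRelation} (derived from \eqref{xxAction} and \eqref{xwAction}) accomplishes. The main technical obstacle will be the bookkeeping in that push-through: the commutation of $\tfrac{1}{q}\sum_b x_{1,k_m+1}(b)$ past $\overset{\ast}{w}_{k_m}(a^{(m)})$ produces auxiliary unipotent factors, and one must verify that each such factor either lies in $U_n$ (and hence is swallowed by $e_U$) or is reabsorbed into the $\ast$-averaging of the $(m+1)$-st column. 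Lemma \ref{VectorIdentification} is precisely the combinatorial tool that governs when this absorption is possible, and its condition ``there exists $m$ with $w_{k_m}\cdots w_{k_l}(i)=1$'' matches the recursion $\tilde k_j^*=\min\{\tilde k_j,\max\{\tilde k_1^*+1,\ldots,\tilde k_{j-1}^*+1\}\}$ defining the $\ast$-height of $\tilde K$. The iteration halts exactly when an additional $\ast$ would exceed $\tilde k_j$ --- equivalently, when the diagram first lies in $\mathcal P_{n\times r+1}(q)$ --- at which point the left-hand side has become $v_{\tilde K}$.
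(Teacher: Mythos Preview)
Your approach is correct and coincides with the paper's: the paper's one-line proof is precisely ``apply Lemma~\ref{VectorIdentification} to the vector $w_0\otimes_U v_K$,'' which is exactly your identification $e_Uv_K=1\otimes_U v_K$ followed by invoking Lemma~\ref{VectorIdentification} to see that the freed entries match the $\ast$-height of $\tilde K$. Your column-by-column description is a more explicit unwinding of that same lemma, not a different argument.
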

\begin{proof} Apply Lemma \ref{VectorIdentification} to the vector
$$w_0\otimes_U v_K$$
to obtain the statement of this lemma.
\end{proof}

\begin{proof}[Proof of Theorem \ref{BasisTheorem} (b)] We have that
\begin{align*}
\IR^{r+1/2}_q&=e_U\IR_q^r\\
&=\CC\spanning\{e_Uv_K\mid K\in \mathcal{P}_{n\times r}(q)\}\\
&=\CC\spanning\{v_{\tilde K}\mid \mathcal{P}_{n\times r}(q)\},
\end{align*}
and by Lemma \ref{VectorIdentification}, the vectors in the last set are linearly independent.
\end{proof}
\end{subsection}

\end{section}

\section{Group action on $\IR^r$}

In general,
$$gw_k(a)\otimes_U v=w_l(b)\otimes_U p v, \qquad \text{where $gw_k(a)=w_l(b)p$.}$$
Thus, globally the matrix of $g$ is the matrix of $g$ acting by left multiplication on $G/P$.   The group $G$ has generators given by 
$$\{x_{ij}(t)\mid 1\leq i<j\leq n,t\in \FF_q\}\cup\{s_1,s_2,\ldots, s_{n-1}\}\cup \{h_k(t)\mid 1\leq k\leq n, t\in \FF_q^\times\},$$
where $h_k(t)$ is the identity matrix with the $k$th diagonal $1$ replaced by $t$.  The generators of $G$ act on $\IR^r_q$ in the following way:
\begin{align*}
s_i w_k(a)\otimes_U v &=\left\{\begin{array}{ll} w_{k}(a)\otimes_U s_i v, & \text{if $i> k+1$,}\\
w_{k+1}({\scriptstyle a_1,a_2,\ldots,a_k,0})\otimes_U v, & \text{if $i=k+1$,}\\
w_{k-1}({\scriptstyle a_1,\ldots, a_{k-1}})\otimes_U v, & \text{if $i=k$, $a_k=0$,}\\
w_{k}({\scriptstyle a_1,\ldots,a_{k-1},a_k^{-1}})\otimes_U h_{k+1}(-a_k^{-1})v, & \text{if $i=k$, $a_k\neq 0$,}\\
w_{k}({\scriptstyle a_1,\ldots, a_{i-1}, a_{i+1},a_i,a_{i+2},\ldots,a_k})\otimes_U s_{i+1} v, & \text{if $i<k$.}\end{array}\right.\\
h_j(b)w_k(a)\otimes_U v&=
\left\{\begin{array}{ll} w_{k}(a)\otimes_U h_j(b) v, & \text{if $j> k+1$,}\\
w_{k}({\scriptstyle  a_1b^{-1},\ldots,a_kb^{-1}})\otimes_U  v, & \text{if $j=k+1$,}\\
w_{k}({\scriptstyle  a_1,\ldots, a_{j-1}, ba_{j},a_{j+1},\ldots,a_k})\otimes_U h_{j+1}(b) v, & \text{if $i<k+1$.}\end{array}\right.\\
x_{ij}(b)w_k(a)\otimes_U v &=\left\{
\begin{array}{ll} w_{k}(a)\otimes_U x_{ij}(b) v, & \text{if $i> k+1$,}\\
w_{k}(a)\otimes_U x_{k+1,j}(-a_kb)\cdots x_{2,j}(-a_1b) v, & \text{if $i=k+1\neq 1$,}\\
w_{k}(a)\otimes_U  v, & \text{if $i=k+1=1$,}\\
w_{k}(a)\otimes_U x_{i+1,j}(b)v, & \text{if $i<k+1<j$,}\\
w_{k}({\scriptstyle a_1,\ldots,a_{i-1},a_i+b,a_{i+1},\ldots, a_k})\otimes_U v, & \text{if $i<k+1=j$,}\\
w_{k}({\scriptstyle  a_1,\ldots, a_{i-1},a_i+ba_j, a_{i+1},\ldots,a_k})\otimes_U x_{i+1,j+1}(b) v, & \text{if $j<k+1$.}\end{array}\right.
\end{align*}

\end{document}